\documentclass[12pt]{amsart}
\usepackage{fullpage}

\theoremstyle{theorem}
\newtheorem{theorem}{Theorem}
\newtheorem{proposition}[theorem]{Proposition}
\newtheorem{lemma}[theorem]{Lemma}
\newtheorem{corollary}[theorem]{Corollary}

\theoremstyle{definition}
\newtheorem{definition}[theorem]{Definition}

\usepackage{colonequals}
\usepackage{tikz}
\usetikzlibrary{decorations.pathreplacing}
\usepackage{amsmath,amssymb,amsthm,verbatim,mathtools,mathrsfs}
\usepackage{xspace}
\usepackage{expl3} 
\usepackage{xparse,l3keys2e}

\usepackage{wordle}
\WordleSetup{
	present color=black
}

\usepackage{tabularx,vcell}
\definecolor{darkred}{rgb}{0.7,0,0}
\definecolor{multizero}{rgb}{1,1,1}
\definecolor{multione}{rgb}{0,0,0}

\definecolor{highlightcircle}{rgb}{1,1,0}

\DeclareMathOperator\exc{exc}
\DeclareMathOperator\des{des}
\DeclareMathOperator\Exc{Exc}
\DeclareMathOperator\Des{Des}

\newcolumntype{C}{>{\centering\arraybackslash}X}

\newcommand\classic{Permutation Wordle\xspace}
\newcommand\rainbow{multicolor Permutation Wordle\xspace}
\newcommand\basicstrat{\textnormal{\textsc{CircularShift}}\xspace}
\newcommand\floor[1]{\left\lfloor{#1}\right\rfloor} 

\usepackage[bookmarks]{hyperref}
\hypersetup{
	colorlinks=true,
	linkcolor=black,
	anchorcolor=black,
	citecolor=black,
	urlcolor=black,
	pdfpagemode=UseThumbs,
	pdftitle={Permutation Wordle},
	pdfsubject={Combinatorics},
	pdfauthor={Kutin, Smithline, and Vatter},
}

\begin{document}

\title{Permutation Wordle}

\author[Samuel Kutin]{Samuel~Kutin}
\address{Center for Communications Research, 805 Bunn Drive, Princeton,
NJ 08540-1966, USA}
\email{kutin@ccr-princeton.org}

\author[Lawren Smithline]{Lawren~Smithline}
\address{Center for Communications Research, 805 Bunn Drive, Princeton,
NJ 08540-1966, USA}
\email{lawren@ccr-princeton.org}

\author[Vincent Vatter]{Vincent~Vatter}
\address{Department of Mathematics, 1400 Stadium Rd, University of Floriday, Gainseville, FL 32611, USA}
\email{vatter@ufl.edu}

\begin{abstract}
  We introduce a guessing game, ``Permutation Wordle,'' in which a
  guesser attempts to recover a setter's hidden permutation of the set
  $\{1, \ldots, n\}$. In each round, the guesser submits a word over
  the alphabet $\{1, \ldots, n\}$, and, as in the game Wordle, learns
  which entries are correct. We describe a natural strategy and prove
  that it is optimal in a strong sense: for every $r$, it solves at least as
  many secrets within $r$ rounds as any possible strategy.
  The number of permutations it solves in exactly $k+1$ rounds is the
  Eulerian number $A(n,k)$.
\end{abstract}

\maketitle

%
%

\section{Introduction.}
\label{sec:intro}

Jon Scieszka~\cite{math-curse} writes,
``You can think of almost everything as a math problem.''  We present an example of
how this happens. We start with the game Wordle, turn it into a guessing game with
permutations, and find connections to a well-studied sequence:
the Eulerian numbers.

The online game Wordle, introduced by Josh
Wardle~\cite{Benveniste2022Wordle} in October 2021 and purchased by
\emph{The New York Times} the following
January~\cite{Tracy2022Wordle}, asks the guesser to discover a secret
five-letter word in at most six guesses. In each round, the guesser
chooses a five-letter English word and learns which letters are
correct (in the secret word in the same place), which are misplaced
(in the secret word in a different place), and which are wrong (not in
the secret word at all).\footnote{Wordle has well-defined rules for
handling repeated letters, but these are not relevant to this paper:
our secrets have no repeated letters.}

One can find
strategy tips for Wordle online.
Optimal play depends on knowledge of
English vocabulary, consonant and vowel patterns, common word endings,
and the like. In this paper we strip out the language and make the game more
abstract.  We replace the English lexicon with the symmetric group:
the secret is a permutation of $[n] = \{1,
2, \ldots, n\}$, and a guess may be any word of length $n$ with
letters from the alphabet $[n]$. We call the resulting game
\classic. The feedback in \classic is simpler than in original Wordle:
because the secret contains every number exactly once, we already
know which ``letters'' are in the answer.  Hence, the only useful
feedback is listing which entries are in the correct place.

What is the optimal strategy? How many guesses do we need on average?
More generally, what is the probability that we solve the secret
within $r$ guesses? In Section~\ref{sec:circular-shift} we offer a
strategy: \basicstrat. In Section~\ref{sec:eulerian}, we give background
on the \emph{Eulerian numbers}, a doubly indexed sequence that
frequently arises in the analysis of permutations~\cite{petersen}.
In Section~\ref{sec:eulerian-connect} we show
that the number of permutations \basicstrat solves in exactly $r$
guesses is the Eulerian number $A(n, r-1)$.
In Section~\ref{sec:proof}, we show that this strategy is optimal: for
any~$r$, \basicstrat solves the largest possible number of
permutations within~$r$ rounds. In particular, it minimizes the
expected number of guesses, which is $(n+1)/2$.

The appearance of Eulerian numbers is one sign that we have found a
``natural'' math problem from Wordle. Another is that this
permutation-guessing problem seems to keep arising.  We
first considered it not because of Wordle, but as an abstraction of
permutation-guessing games on
\texttt{\href{https://sporcle.com/}{sporcle.com}}. In July 2025,
Richard Stanley~\cite{Stanley2025}, motivated by a guessing game in
the app \emph{Royal Match}, posed the same question on MathOverflow,
asking for the optimal strategy and the expected number of guesses; a
comment on the question points to an earlier version of this
paper~\cite{arxiv-kutin-smithline} and
to related work of Hiveley~\cite{Hiveley2025}.

There are also connections between Wordle (or \classic) and the
classic board game Mastermind. We discuss these in
Section~\ref{sec:mastermind}. Finally, we mention some possible
extensions in Section~\ref{sec:open}.

%
%

\section{Circular shift.}
\label{sec:circular-shift}

We describe a simple strategy, \basicstrat, that solves \classic in at
most~$n$ guesses.\footnote{Li and Zhu~\cite{LiZhu2024} showed that no
strategy can do better than this in the worst case. See
Section~\ref{sec:mastermind} for more discussion.}  We first formalize
the Permutation Wordle setting.

Let $S_n$ denote the group of permutations on $[n]$, and let $\sigma
\in S_n$ be the setter's secret permutation. In round~$r$, the guesser
submits a guess $\gamma_r \in [n]^n$, which we regard as a function
from positions to values, writing $\gamma_r(i)$ for its $i$th letter.
The guesser then receives the feedback
\[
	U_r = \{i \mid \gamma_r(i) = \sigma(i)\},
\]
the set of positions where $\gamma_r$ agrees with~$\sigma$. Equivalently, $U_r$ is
the set of fixed points of the function $\sigma^{-1} \gamma_r$. 
The guesser may use the information $U_k$ for $1 \leq k \leq r$ in making the choice of
$\gamma_{r+1}$.


\subsection{Reducing to a uniformly random secret.}
\label{sec:random-secret}

In \classic, the setter commits to $\sigma$ at the start of the game,
so the secret does not change as the game
proceeds. (This is unlike worst-case analyses of Mastermind, which
often consider an adaptive setter; see Section~\ref{sec:mastermind}.)
Could the setter make the game harder by choosing $\sigma$ from
a nonuniform distribution on $S_n$, perhaps one tuned against the
guesser's strategy?

The answer is no, because the guesser has a response. After the
setter has committed to $\sigma$, the guesser can pick a uniformly
random permutation $\tau$ and, wherever their strategy would guess the
word $\gamma_r$, instead guess the word $\tau\gamma_r$ (defined by
$\tau\gamma_r(i) = \tau(\gamma_r(i))$ for all $i \in [n]$). The
feedback in round $r$ would then be the set of positions $i$ with
$\tau(\gamma_r(i)) = \sigma(i)$, that is, with $\gamma_r(i) =
\tau^{-1}(\sigma(i))$. This is precisely the feedback the original
strategy would receive against the secret $\tau^{-1}\sigma$, which is
uniformly random no matter how the setter chose $\sigma$. Thus, any
performance the guesser can achieve against a uniformly random secret
can be achieved regardless of the setter's distribution on $S_n$.

For simplicity, we assume for the rest of the paper that $\sigma$ is
chosen uniformly at random. In practice, one would precede any
strategy with a uniformly chosen $\tau$.


\subsection{Small~$n$.}
\label{sec:circular-shift-small-n}

We begin with the case $n=3$. Because we assume that~$\sigma$ is
uniformly random, all first guesses that are permutations are
equivalent, so we may as well choose the identity permutation as our
first guess $\gamma_1$.\footnote{For $n \le 4$, the reader can verify that
non-permutation guesses do not do better.  In general, Theorem~\ref{thm:opt} demonstrates
that \basicstrat, which always guesses a permutation, is optimal.}
In response to this first guess, we learn the set of fixed points of~$\sigma$.

\begin{figure}[h]
\begin{center}
\begin{tabularx}{\textwidth}{CCC}
	\savecellbox{\scalebox{0.7}{\begin{wordle}{123}
	123
	\end{wordle}}}
&
	\savecellbox{\scalebox{0.7}{\begin{wordle}{132}
	123
	132
	\end{wordle}}}
&
	\savecellbox{\scalebox{0.7}{\begin{wordle}{213}
	123
	213
	\end{wordle}}}
\\ [-\rowheight]
	\printcelltop & \printcelltop & \printcelltop
\\
& & \\
	\savecellbox{\scalebox{0.7}{\begin{wordle}{231}
	123
	312
	231
	\end{wordle}}}
&
	\savecellbox{\scalebox{0.7}{\begin{wordle}{312}
	123
	312
	\end{wordle}}}
&
	\savecellbox{\scalebox{0.7}{\begin{wordle}{321}
	123
	321
	\end{wordle}}}
\\ [-\rowheight]
	\printcelltop & \printcelltop & \printcelltop
\end{tabularx}
\end{center}
\caption{\classic when $n=3$. We solve $1$ permutation in one round, $4$ in two rounds, and $1$ in three rounds.}
\label{fig-n3}
\end{figure}

Figure~\ref{fig-n3} shows the different scenarios.
There are three possibilities, depending on the number
of fixed points of $\sigma$.
If~$\sigma$ has three fixed points, then~$\sigma$ is the identity and we are done in
one guess. If~$\sigma$
has exactly one fixed point, then~$\sigma$ is one of the three permutations that swaps two
values, and the fixed point identifies which one;
in round~$2$, we swap those two values and get the correct answer.
If~$\sigma$ has no fixed points, then~$\sigma$ is one of the two $3$-cycles,
but we do not know which. Our best strategy is
to guess one of the $3$-cycles in round~$2$; this second guess will be correct half of the time, and we will find $\sigma$ on our third guess the other half.
Overall, the probability of solving~$\sigma$ in one round is $1/6$;
in two rounds, $4/6$; in three rounds, $1/6$.

The analysis for $n=4$ is more complicated. Again, let $\gamma_1$ be the identity.
If~$\sigma$ is the identity, we finish in one round. If~$\sigma$ has one or two fixed
points, we can just keep those values the same in $\gamma_r$ for all $r > 1$; this lets
us reduce to the $n=2$ or $n=3$ case, and we finish in a total of two or three rounds.

What if~$\sigma$ is one of the nine \emph{derangements}, meaning~$\sigma$ has no fixed points? The optimal strategy is to let $\gamma_2$ be a $4$-cycle.\footnote{This choice is optimal both in the sense of maximizing the probability of solving within a particular number of rounds and in the sense of minimizing the expected number of rounds; both claims can be verified by exhaustive search, and also follow from Theorem~\ref{thm:opt}.} If $\sigma = \gamma_2$, we are done, and it took two rounds. If $\gamma_2$ agrees with $\sigma$ in one or two positions, this feedback determines~$\sigma$, and we guess it in the third round. Otherwise, the only remaining possibilities for~$\sigma$ are $\gamma_2^2$ and $\gamma_2^3$; we guess one of them in the third round, and if it is wrong, the other in the fourth. Figure~\ref{fig-n4} depicts this overall strategy.  The probabilities of finishing in one, two, three, or four
rounds are $1/24$, $11/24$, $11/24$, and $1/24$, respectively.

\begin{figure}
\begin{center}
\begin{tabularx}{\textwidth}{CCCCCC}
\savecellbox{\scalebox{0.5}{\begin{wordle}{1234}
  1234
\end{wordle}}} &
\savecellbox{\scalebox{0.5}{\begin{wordle}{1243}
      1234
      1243
\end{wordle}}} &
\savecellbox{\scalebox{0.5}{\begin{wordle}{1324}
      1234
      1324
\end{wordle}}} &
\savecellbox{\scalebox{0.5}{\begin{wordle}{1342}
      1234
      1423
      1342
\end{wordle}}} &
\savecellbox{\scalebox{0.5}{\begin{wordle}{1423}
      1234
      1423
\end{wordle}}} &
\savecellbox{\scalebox{0.5}{\begin{wordle}{1432}
      1234
      1432
\end{wordle}}} \\ [-\rowheight]
\printcelltop & \printcelltop & \printcelltop & \printcelltop & \printcelltop & \printcelltop\\
&&&&&\\
\savecellbox{\scalebox{0.5}{\begin{wordle}{2134}
      1234
      2134
\end{wordle}}} &
\savecellbox{\scalebox{0.5}{\begin{wordle}{2143}
      1234
      4123
      2143
\end{wordle}}} &
\savecellbox{\scalebox{0.5}{\begin{wordle}{2314}
      1234
      3124
      2314
\end{wordle}}} &
\savecellbox{\scalebox{0.5}{\begin{wordle}{2341}
      1234
      4123
      3412
      2341
\end{wordle}}} &
\savecellbox{\scalebox{0.5}{\begin{wordle}{2413}
      1234
      4123
      2413
\end{wordle}}} &
\savecellbox{\scalebox{0.5}{\begin{wordle}{2431}
      1234
      4132
      2431
\end{wordle}}} \\ [-\rowheight]
\printcelltop & \printcelltop & \printcelltop & \printcelltop & \printcelltop & \printcelltop\\
&&&&&\\
\savecellbox{\scalebox{0.5}{\begin{wordle}{3124}
      1234
      3124
\end{wordle}}} &
\savecellbox{\scalebox{0.5}{\begin{wordle}{3142}
      1234
      4123
      3142
\end{wordle}}} &
\savecellbox{\scalebox{0.5}{\begin{wordle}{3214}
      1234
      3214
\end{wordle}}} &
\savecellbox{\scalebox{0.5}{\begin{wordle}{3241}
      1234
      4213
      3241
\end{wordle}}} &
\savecellbox{\scalebox{0.5}{\begin{wordle}{3412}
      1234
      4123
      3412
\end{wordle}}} &
\savecellbox{\scalebox{0.5}{\begin{wordle}{3421}
      1234
      4123
      3421
\end{wordle}}} \\ [-\rowheight]
\printcelltop & \printcelltop & \printcelltop & \printcelltop & \printcelltop & \printcelltop\\
&&&&&\\
\savecellbox{\scalebox{0.5}{\begin{wordle}{4123}
      1234
      4123
\end{wordle}}} &
\savecellbox{\scalebox{0.5}{\begin{wordle}{4132}
      1234
      4132
\end{wordle}}} &
\savecellbox{\scalebox{0.5}{\begin{wordle}{4213}
      1234
      4213
\end{wordle}}} &
\savecellbox{\scalebox{0.5}{\begin{wordle}{4231}
      1234
      4231
\end{wordle}}} &
\savecellbox{\scalebox{0.5}{\begin{wordle}{4312}
      1234
      4123
      4312
\end{wordle}}} &
\savecellbox{\scalebox{0.5}{\begin{wordle}{4321}
      1234
      4123
      4321
\end{wordle}}} \\ [-\rowheight]
\printcelltop & \printcelltop & \printcelltop & \printcelltop & \printcelltop & \printcelltop
\end{tabularx}   
\end{center}
\caption{\classic when $n=4$. We solve $1$ permutation in one round, $11$ in two rounds, $11$ in three rounds, and $1$ in four rounds.}
\label{fig-n4}
\end{figure}


\subsection{The general algorithm.}
\label{sec:circular-shift-general}

We now extend our approach from $n=3$ and $4$ to general~$n$.  This
strategy is deterministic; the only randomness a guesser
ever needs is the relabeling described in Section~\ref{sec:random-secret}.

We call our strategy \basicstrat.  Every guess is a permutation; the
first guess is the identity permutation.  Informally, the idea is as
follows: once we get a value correct (that is, $\gamma_r(i) =
\sigma(i)$), we never change it in subsequent rounds.  We cycle the
misplaced values one step to the right to form~$\gamma_{r+1}$. Over
successive guesses, each value travels to its correct position and
stays there.

Keeping correct values in place, and requiring each guess to be a permutation,
is similar to playing Wordle in ``hard
mode.'' Hard mode requires, roughly, that every guess could be
correct, given all of the feedback received so far.
In Wordle, hard mode is a genuine restriction, and the
strongest players do better without it.\footnote{Interestingly, a
recent analysis of 730 million Wordle games shows that players who
impose the constraint on themselves do better, on average, than those
who do not~\cite{Monkovic2026WordleHard}.} An unrestricted player
can profit from a guess that abandons the revealed letters in order to
limit the remaining possibilities; for example, they can try a new letter to
see whether it appears in the secret.
Does the same ever hold in \classic?

We can give part of the answer now: in \classic,
trying a value where it is known to be incorrect never helps, since
we already know going in that the secret contains every value in $[n]$ exactly once.
To make this precise, suppose we know from a prior round that
$\sigma(i) \ne j$, but we make a guess~$\gamma$ with $\gamma(i) = j$.
Let~$\gamma'$ be identical to~$\gamma$ except that $\gamma'(i) = j'$ for
some other $j'$.
The feedback to the two guesses can differ only in position~$i$:
we know $\gamma$ is wrong there, and $\gamma'$ might be right,
so the guess $\gamma'$ might yield new information.  Also,
$\gamma'$ might be equal to $\sigma$, while
$\gamma$ cannot be. Thus, $\gamma'$ is always at least as useful a
guess as~$\gamma$. The full answer is Theorem~\ref{thm:opt}: no
strategy outperforms \basicstrat.

We state the strategy precisely.

\begin{definition}[\basicstrat]\label{def:basicstrat}
Let~$\gamma_1$ be the identity in~$S_n$. Suppose that, in round~$r$, we submit the guess $\gamma_r$ and receive the feedback $U_r$, the set of positions where $\gamma_r$ agrees with the secret. If $U_r = [n]$, we are done, and it took~$r$ rounds. Otherwise, let $[n] \setminus U_r = \{a_1 < \cdots < a_{\ell}\}$ be the set of incorrect positions, and define the next guess to be
\[
	\gamma_{r+1}(i) := \begin{cases}
	\gamma_r(i) & \text{if $i \in U_r$,} \\
	\gamma_r(a_{j-1}) & \text {if $i = a_j$ for some $j > 1$,} \\
	\gamma_r(a_\ell) & \text {if $i = a_1$.}
	\end{cases}
\]
\end{definition}

The approach depicted in Figures~\ref{fig-n3} and~\ref{fig-n4} is \basicstrat. For a larger example, see Figure~\ref{fig-n9}.

\begin{figure}
\begin{center}
	\begin{wordle}{724853169}
	123456789
	821354679
	728153469
	724853169
	\end{wordle}
\end{center}
\caption{\basicstrat guessing the secret $\sigma = 724853169$.
The strategy succeeds in four rounds.}
\label{fig-n9}
\end{figure}

Some readers may have noticed a pattern in the analysis of \basicstrat for small $n$.
For $n=3$, the number of permutations solved in $1$, $2$, or $3$ rounds are $1$, $4$, $1$; for $n=4$, the corresponding numbers are $1$, $11$, $11$, $1$.  These are the
\emph{Eulerian numbers} (see Table~\ref{tab:eulerian} below).
We discuss these numbers in Section~\ref{sec:eulerian}; we
show that they govern the performance of \basicstrat in Section~\ref{sec:eulerian-connect}.
In Section~\ref{sec:proof}, we show the Eulerian numbers 
also bound the performance
of any strategy for \classic, proving the optimality of \basicstrat.

%
%

\section{Eulerian numbers.}
\label{sec:eulerian}

The performance of \basicstrat on a permutation $\pi$ turns on the number
of \emph{excedances} of $\pi$.

\begin{definition}\label{def:exc}
For $\pi \in S_n$, an \emph{excedance} is a position $i$ for which $\pi(i) > i$.  Let
$\Exc(\pi)$ denote the set of excedances: $\Exc(\pi) := \{i \mid \pi(i) > i\}$.
Write $\exc(\pi) := \left|\Exc(\pi)\right|$, the number of excedances of $\pi$.
\end{definition}

The number of permutations in $S_n$ with a given number of excedances
is a classical quantity, the \emph{Eulerian number}:

\begin{definition}\label{def:euler}
The \emph{Eulerian number} $A(n,k)$ is $\left|\{\pi \in S_n \mid \exc(\pi) = k\}\right|$.
\end{definition}

For example, when $n = 3$, the only permutation with zero excedances
is the identity, so $A(3,0) = 1$.  The only permutation in $S_3$ with
two excedances is $231$, so $A(3,2) = 1$. We deduce that $A(3,1) = 4$.

Our proof of optimality in Section~\ref{sec:proof} will use a different characterization
of the Eulerian numbers, in terms of descents:

\begin{definition}\label{def:desc}
For $\pi \in S_n$, writing $\pi = \pi(1) \pi(2) \cdots \pi(n)$ in one-line notation, a \emph{descent} is a position $i < n$ with $\pi(i) > \pi(i+1)$. Let
\[
	\Des(\pi) = \{i \mid \pi(i) > \pi(i+1)\}, \qquad \des(\pi) = |\Des(\pi)|.
        \]
\end{definition}

It turns out that the number of permutations in $S_n$ with $k$ descents is also
given by $A(n,k)$.  One way to prove this is to show that both counts satisfy the same
recurrence.

\begin{proposition}\label{prop:excrecur}
The numbers $A(n,k)$ satisfy the recurrence
\[
	A(n,k) = (k+1) A(n-1, k) + (n-k) A(n-1, k-1)
\]
for $n \geq 1$ and $0 \leq k \leq n-1$, with initial conditions $A(0,0) = 1$ and $A(0,k) = 0$ for $k \neq 0$.
\end{proposition}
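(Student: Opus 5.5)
The plan is a direct combinatorial argument using Definition~\ref{def:euler}: build each permutation of $[n]$ by inserting the value $n$ into a permutation of $[n-1]$, and track how the number of excedances changes. The initial conditions are immediate, since $S_0$ contains only the empty permutation, which has $0$ excedances. So assume $n\ge 1$.

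\textbf{The insertion map.} Given $\pi\in S_{n-1}$, there are $n$ ways to produce some $\sigma\in S_n$.
\begin{enumerate}
\item Set $\sigma(n)=n$ and $\sigma(j)=\pi(j)$ for $j<n$. In cycle notation, $n$ is added as a new fixed point.
\item For a chosen position $i\in[n-1]$, set $\sigma(i)=n$, $\sigma(n)=\pi(i)$, and $\sigma(j)=\pi(j)$ for all other $j$. In cycle notation, $n$ is inserted immediately after $i$ in its cycle.
\end{enumerate}
This map $S_{n-1}\times[n]\to S_n$ is a bijection. Its inverse is as follows: if $\sigma(n)=n$, delete $n$. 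Otherwise let $i=\sigma^{-1}(n)$, and define $\pi$ by $\pi(i)=\sigma(n)$ and $\pi(j)=\sigma(j)$ for $j\ne i,n$. Checking that these two constructions are mutually inverse is routine.

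\textbf{Excedance bookkeeping.} In option~1, every excedance of $\pi$ remains one, and $n$ is not an excedance, so $\exc(\sigma)=\exc(\pi)$.

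In option~2, only positions $i$ and $n$ change.
\begin{itemize}
\item Position $i$ is now an excedance, since $n>i$.
\item Position $n$ is not an excedance, since $\pi(i)<n$.
\item In $\pi$, position $i$ was an excedance exactly when $\pi(i)>i$.
\end{itemize}
Hence $\exc(\sigma)=\exc(\pi)$ if $i\in\Exc(\pi)$, and $\exc(\sigma)=\exc(\pi)+1$ otherwise. The "otherwise" case includes the possibility that $i$ is a fixed point of $\pi$.

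\textbf{Counting.} Consider $\pi$ with $\exc(\pi)=k$. It yields a $\sigma$ with $k$ excedances in exactly $k+1$ ways: the $k$ choices $i\in\Exc(\pi)$, plus option~1. The remaining $(n-1)-k$ choices of $i$ yield a $\sigma$ with $k+1$ excedances.

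Now fix $k$ and sum over the two ways of reaching $\exc(\sigma)=k$. From $\pi$ with $k$ excedances there are $k+1$ ways. From $\pi$ with $k-1$ excedances there are $(n-1)-(k-1)=n-k$ ways. Because the insertion map is a bijection, each $\sigma$ is counted exactly once, and so
\[
A(n,k)=(k+1)A(n-1,k)+(n-k)A(n-1,k-1).
\]

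The only step requiring care is the excedance bookkeeping in option~2, in particular the edge case where $i$ is a fixed point of $\pi$. There $\sigma(n)=\pi(i)=i<n$, which is consistent with the count above. The rest is routine.
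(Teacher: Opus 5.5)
Your proof is correct and follows the same route as the paper: append $n$ to the word of $\pi$, swap positions $i$ and $n$ when $i<n$, and observe that position $i$ becomes an excedance while position $n$ never is. This gives the $k+1$ versus $(n-1)-k$ split and the recurrence. The cycle-notation description and the explicit inverse you give are a slightly more careful rendering of the paper's remark that each permutation of $S_n$ arises in exactly one way.
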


\begin{proof}
  The initial conditions hold because $S_0$ contains only the empty permutation, which has no excedances.
  
For the recurrence, we build the permutations in $S_n$ from those in $S_{n-1}$.
Given $\rho \in S_{n-1}$ and~${i \in [n]}$, we form a permutation $\pi \in S_n$:
first, append $n$ to the word $\rho(1) \rho(2) \cdots \rho(n-1)$. Second, if $i < n$, swap the
entries in positions $i$ and $n$.  Each $\pi \in S_n$ arises in exactly one way.

We compare the excedances of $\pi$ to those of $\rho$.  If $i = n$,
then no entry has moved, and $\exc(\pi) = \exc(\rho)$.  If $i < n$,
then position~$i$ is an excedance of $\pi$, while position $n$ is not.  Every other position
in $\pi$ keeps its status.  Hence, for the $1 + \exc(\rho)$ choices consisting of $i = n$
together with the $i \in \Exc(\rho)$, we have $\exc(\pi) = \exc(\rho)$; for the remaining
$n-1-\exc(\rho)$ choices, we have
$\exc(\pi) = 1 + \exc(\rho)$.  Summing over $\rho \in S_{n-1}$ proves the recurrence.
\end{proof}

\begin{proposition}\label{prop:desrecur}
The number of permutations $\pi \in S_n$ with $\des(\pi) = k$ is given by $A(n,k)$.
\end{proposition}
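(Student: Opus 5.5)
Following the hint in the text, we will show that the numbers $D(n,k) := \left|\{\pi \in S_n \mid \des(\pi) = k\}\right|$ satisfy the same recurrence and initial conditions as in Proposition~\ref{prop:excrecur}. Induction on $n$ then gives $D(n,k) = A(n,k)$ for all $n$ and $k$. The initial conditions are immediate: the empty permutation has no descents, so $D(0,0)=1$ and $D(0,k)=0$ for $k \ne 0$.

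For the recurrence, we build each $\pi \in S_n$ uniquely from some $\rho \in S_{n-1}$ by inserting the value $n$ into one of the $n$ gaps of the word $\rho(1)\cdots\rho(n-1)$. These gaps are: before the first letter, between two consecutive letters, or after the last letter. Deleting $n$ from $\pi$ recovers $\rho$ and the gap, so this is a bijection between $S_n$ and pairs $(\rho, \text{gap})$. We then track how the descent count changes in each case.
\begin{itemize}
\item Inserting $n$ at the end creates no descent, so $\des(\pi)=\des(\rho)$.
\item Inserting $n$ inside a descent $\rho(j)>\rho(j+1)$ replaces that descent with the new descent $n>\rho(j+1)$, while $\rho(j)<n$ is an ascent. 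So $\des(\pi)=\des(\rho)$.
\item Inserting $n$ inside an ascent $\rho(j)<\rho(j+1)$ adds exactly one descent.
\item Inserting $n$ at the front adds exactly one descent, namely $n>\rho(1)$.
\end{itemize}
In each case all other adjacent pairs are unaffected. (For $n=1$ there is only the single insertion into the empty word, which is consistent with the formula.)

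So if $\des(\rho)=m$, the descent count is preserved for $m+1$ of the gaps (the end plus the $m$ descents). It increases by one for the remaining $n-1-m$ gaps (the front plus the $n-2-m$ ascents). Counting the $\pi$ with $\des(\pi)=k$ then gives $(k+1)D(n-1,k)$ from those $\rho$ with $m=k$. It gives $(n-1-(k-1))D(n-1,k-1) = (n-k)D(n-1,k-1)$ from those $\rho$ with $m=k-1$. This is exactly the recurrence of Proposition~\ref{prop:excrecur}.

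The only delicate step is the case analysis at the gaps, specifically checking that insertion at a descent preserves the count and that the front and end gaps behave asymmetrically. The rest is bookkeeping.
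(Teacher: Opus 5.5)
Your proof is correct and is essentially the paper's argument. The paper indexes the insertion of $n$ by the value $i$ it is placed immediately before (with $i=n$ meaning the end), which is exactly your choice of gap, and the same case split (the end and descent gaps preserve $\des$, the front and ascent gaps increase it by one) yields the recurrence of Proposition~\ref{prop:excrecur}.
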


\begin{proof}
  Let $D(n,k)$ be the number of permutations in $S_n$ with exactly $k$ descents.  It
  suffices to show that these numbers satisfy the same recurrence and initial conditions
  as $A(n,k)$; that is,
\[
	D(n,k) = (k+1) D(n-1, k) + (n-k) D(n-1, k-1)
\]
for $n \geq 1$ and $0 \leq k \leq n-1$, with initial conditions $D(0,0) = 1$ and $D(0,k) = 0$ for $k \neq 0$.

The initial conditions hold because $S_0$ contains only the empty permutation, which has no descents.

For the recurrence, we build the permutations in $S_n$ from those in
$S_{n-1}$. Given $\rho \in S_{n-1}$ and~${i \in [n]}$, insert $n$ at
the end of the word $\rho(1) \rho(2) \cdots \rho(n-1)$ if $i = n$, and
immediately before the value $i$ if $i < n$. Every $\pi \in S_n$
arises in exactly one way; we recover $\rho$ by deleting $n$, and we
recover $i$ as the value following $n$, or as $n$ itself if $n$ is the
final entry.

We compare the descents of $\pi$ to those of $\rho$. If $i = n$, then
no adjacent pair has changed, and $\des(\pi) = \des(\rho)$.
If $i < n$, and $i$ is one of the $\des(\rho)$ values immediately following
some $j \in \Des(\rho)$, then we are replacing one descent with another,
and $\des(\pi) = \des(\rho)$.  If $i$ is one of the remaining $n-1-\des(\rho)$
choices, then $\des(\pi) = 1 + \des(\rho)$.
Summing over $\rho \in S_{n-1}$ proves the recurrence.
\end{proof}

\begin{table}
\begin{center}
\begin{tabular}{c|ccccc}
	$n \backslash k$ & $0$ & $1$ & $2$ & $3$ & $4$ \\ \hline
	$1$ & $1$ & & & & \\
	$2$ & $1$ & $1$ & & & \\
	$3$ & $1$ & $4$ & $1$ & & \\
	$4$ & $1$ & $11$ & $11$ & $1$ & \\
	$5$ & $1$ & $26$ & $66$ & $26$ & $1$
\end{tabular}
\end{center}
\caption{The Eulerian numbers $A(n,k)$ for $n \leq 5$.}
\label{tab:eulerian}
\end{table}

Table~\ref{tab:eulerian} shows the first several rows of the Eulerian
numbers; the reader has already met the rows for $n = 3$ and $n = 4$,
as the counts in Figures~\ref{fig-n3}
and~\ref{fig-n4}.

Each row of this table is symmetric; the descent interpretation yields a straightforward
proof.  Sending
\[
	\pi(1) \pi(2) \cdots \pi(n) \longmapsto (n + 1 - \pi(1))(n + 1 - \pi(2)) \cdots (n + 1 - \pi(n))
\]
reverses the comparison between each pair of adjacent entries, turning
descents into non-descents and vice versa, and thereby gives a
bijection between permutations with $k$ descents and permutations with
$n - 1 - k$ descents. Therefore, the Eulerian numbers are symmetric:
\begin{equation}
  \label{eq:euler-symmetry}
A(n,k) = A(n, n-1-k).
\end{equation}

Our proof relating excedances and descents, by matching recurrences,
leaves something to be desired, because it does not tell us which
permutation with $k$ descents corresponds to which permutation with
$k$ excedances. A classical bijection of Foata~\cite{Foata1965},
developed in detail under the name ``la transformation fondamentale''
by Foata and Sch\"utzenberger~\cite{FoSc}, does exactly that. The two
recurrence proofs essentially illustrate such a bijection, as each
builds a permutation in $S_n$ from a permutation in $S_{n-1}$ and a
choice of $i \in [n]$, and matching these choices recursively yields a
bijection $\Phi\colon S_n \to S_n$ with $\exc(\pi) = \des(\Phi(\pi))$.


While excedances and descents are the interpretations of the Eulerian
numbers used in this paper, there are many others.  For example, if
$Y_1, \ldots, Y_n$ are independent and uniformly random in $(0, 1)$,
then $\floor{Y_1 + \cdots + Y_n} = k$ with probability $A(n,k)/n!$;
the Eulerian numbers also count the regions of several natural
hyperplane arrangements. Petersen's book~\cite{petersen} surveys these
interpretations and more.

%
%

\section{Analyzing the algorithm's performance.}
\label{sec:eulerian-connect}

We are now ready to connect the Eulerian numbers to the performance of \basicstrat.

\begin{theorem}\label{thm:main}
The number of permutations solved in exactly $k+1$ guesses by \basicstrat is the Eulerian number $A(n,k)$.
\end{theorem}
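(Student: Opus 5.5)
The plan is to prove something stronger, pointwise in $\sigma$: \basicstrat solves $\sigma$ in exactly $\exc(\sigma)+1$ rounds. Theorem~\ref{thm:main} then follows at once from Definition~\ref{def:euler}. Sanity check with $n=3$: $231$ has two excedances and takes three rounds, while $312$ has one and takes two.

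\emph{Setup.} Follow each value $v$ through the game. It starts at position $v$, since $\gamma_1$ is the identity, and its target is $t_v := \sigma^{-1}(v)$. Say $v$ is \emph{misplaced} in round $r$ if its current position differs from $t_v$. By Definition~\ref{def:basicstrat}, a misplaced value at position $a_j$ moves to $a_{j+1}$ when $j<\ell$. When $j=\ell$ it moves to $a_1$, and I call this a \emph{wrap}. Once a value reaches its target, it stays there forever.

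\emph{Key steps.}
\begin{enumerate}
\item \emph{No overshooting.} While $v$ is misplaced, its target $t_v$ is not yet correct, so $t_v$ is one of the incorrect positions $a_1<\cdots<a_\ell$. A non-wrapping move goes from $a_j$ to $a_{j+1}$, and every position strictly between them is correct. Hence such a move never jumps past $t_v$. Between wraps, $v$ moves strictly rightward through incorrect positions and stops exactly when it lands on $t_v$.
\item \emph{Which values wrap, and how often.} Suppose $v$ is at position $p \le t_v$. Then $t_v \le a_\ell$, and by step~1 the value reaches $t_v$ before it can sit at $a_\ell$ while still misplaced, so it never wraps. Suppose instead $t_v < v$. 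Rightward moves alone never reach $t_v$, so $v$ must wrap at least once. After one wrap it sits at $a_1 \le t_v$, and by the previous case it never wraps again. So the values that wrap are exactly those with $t_v<v$, and each wraps exactly once. Setting $i=t_v$, the condition $t_v<v$ reads $\sigma(i)>i$, so these values correspond bijectively to $\Exc(\sigma)$.
\item \emph{Exactly one wrap per failed round.} If round $r$ fails, then $\ell\ge 2$, because a single misplaced entry is impossible when the guess and the secret are both permutations. The value at $a_\ell$ is misplaced and moves to $a_1<a_\ell$, so round $r$ contributes exactly one wrap.
\item \emph{Counting.} If the game ends in round $R$, the total number of wraps is $R-1$ by step~3 and $\exc(\sigma)$ by step~2. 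So $R=\exc(\sigma)+1$. Termination is automatic, since each value wraps at most once and otherwise moves monotonically through finitely many positions.
\end{enumerate}

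\emph{Main obstacle.} The delicate point is step~2. One must argue carefully that the positions a value passes over are exactly the correct ones, so it cannot skip its target. One must also handle the boundary cases, such as a value whose target is its own starting position, or a value that lands on its target precisely by wrapping to $a_1$. I would make step~1 an explicit invariant: for every misplaced $v$, the target $t_v$ lies in the current set of incorrect positions. Steps~2 and~3 can then be read off from that invariant.
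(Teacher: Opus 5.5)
Your proof is correct and is essentially the paper's proof of Lemma~\ref{lemma:one-perm}. The paper tracks the set $R_k$ of values currently to the right of their targets, which are exactly the values that still have to wrap, and shows $R_{k+1}=R_k\setminus\{\gamma_k(a_\ell)\}$ using the same facts you isolate: a misplaced value's target is always an incorrect position, only the value at $a_\ell$ wraps, and a wrapped value lands at $a_1\le t_v$. The difference is minor: you double count wraps and get termination from ``at most one wrap per value,'' whereas the paper treats $|R_k|$ as a decreasing potential and uses a sum argument to show that $R_k=\emptyset$ forces $\gamma_k=\sigma$.
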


First, we describe \basicstrat informally so we can see how excedances
enter the discussion.  As \basicstrat runs, values cycle rightward until they come to the
correct position, and then they stay put.  If a value starts out to the left of its
correct position, it will move straight there.  If it starts to the right of where it belongs,
it must move all the way to the right edge, wrap around to the left, and keep going from
there.  Comparing any two consecutive guesses, exactly one incorrect value wraps around,
and this continues until we determine the secret.

Which are the values that must wrap around?  A value $\sigma(i)$ wraps
around if its starting location $\sigma(i)$ is to the right of its
eventual destination $i$; that is, if $\sigma(i) > i$.  These are
precisely the values sitting at the excedances of $\sigma$.

\begin{lemma}\label{lemma:one-perm}
  The number of guesses \basicstrat takes to find the secret permutation
  $\sigma$ is exactly $1 + \exc(\sigma)$.
\end{lemma}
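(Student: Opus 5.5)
We track each value $v \in [n]$ through the run of \basicstrat. Let $p_r(v)$ denote the position of $v$ in $\gamma_r$, so $p_1(v) = v$ since $\gamma_1$ is the identity. The value $v$ belongs at position $t(v) := \sigma^{-1}(v)$. Every guess is a permutation, and correct values never move, so $v$ is correct in round $r$ exactly when $p_r(v) = t(v)$. The plan is to describe the trajectory of each incorrect value explicitly and read off the number of rounds.

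The key structural claim concerns the set $[n]\setminus U_r = \{a_1 < \cdots < a_\ell\}$ of incorrect positions. In passing from $\gamma_r$ to $\gamma_{r+1}$, each incorrect value moves from $a_j$ to the next incorrect position $a_{j+1}$, except the value at $a_\ell$, which wraps to $a_1$. So exactly one value wraps per round, as long as some position is incorrect. I will show the following invariant by induction on $r$.
\begin{itemize}
\item[(i)] A value $v$ with $v \le t(v)$ never wraps. Its position increases weakly from $v$ until it reaches $t(v)$, where it stays.
\item[(ii)] A value $v$ with $v > t(v)$, that is, one sitting at the excedance $t(v)$ of $\sigma$, wraps exactly once. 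Before wrapping its position only increases. After wrapping it sits at a position $\le t(v)$ and increases from there to $t(v)$.
\end{itemize}

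The main obstacle is proving that a moving value never skips over its target position $t(v)$. Suppose $v$ sits at an incorrect position $a_j < t(v)$. Position $t(v)$ is itself incorrect, because the value $\sigma(t(v)) = v$ is not there. Hence $t(v) = a_{j'}$ for some $j' > j$, so $a_{j+1} \le t(v)$ and $v$ does not overshoot. The same argument shows that $v$ cannot wrap while $p_r(v) < t(v)$, since $a_\ell \ge t(v) > p_r(v)$ means $v$ is not at $a_\ell$. Thus a value wraps only when its position exceeds $t(v)$. For a value with $v > t(v)$, after wrapping to $a_1 \le t(v)$ it is to the left of its target, and the no-overshoot argument applies again. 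For a value with $v \le t(v)$, the no-overshoot argument shows its position stays $\le t(v)$, so it never wraps.

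Counting then finishes the proof. Progress is guaranteed because each round with $U_r \ne [n]$ produces exactly one wrap, and only the $\exc(\sigma)$ excedance values can wrap, each at most once. So there are at most $\exc(\sigma)$ rounds after the first. Conversely, while some excedance value has not yet wrapped, it sits at a position greater than its target, so the guess is wrong. Hence all $\exc(\sigma)$ wraps must occur before the secret is found, giving at least $\exc(\sigma)$ rounds after the first. The final round produces no wrap, since $U_r = [n]$ there. Therefore the number of guesses is exactly $1 + \exc(\sigma)$.
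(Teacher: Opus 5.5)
Your proof is correct and takes essentially the paper's approach. The paper tracks the set $R_k$ of values sitting to the right of their target positions, which is exactly your set of excedance values that have not yet wrapped. It shows $R_{k+1} = R_k \setminus \{\gamma_k(a_\ell)\}$ using the same observation as your no-overshoot step: a misplaced value's target is itself a misplaced position. The only difference is in the termination step: the paper proves that $R_k=\emptyset$ forces $\gamma_k=\sigma$ by a summing argument, whereas you bound the number of wrong rounds by the available wraps.
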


Lemma~\ref{lemma:one-perm} immediately implies Theorem~\ref{thm:main}.

\begin{proof}[Proof of Lemma~\ref{lemma:one-perm}]
Let $\sigma$ be the setter's permutation.  Run \basicstrat, producing a series of
guesses $\gamma_1$, $\gamma_2$, $\ldots \in S_n$.  (Part of our task is to show that
\basicstrat terminates with some $\gamma_r = \sigma$.)
In each round, let $R_k$ be the set of values placed farther to the right in $\gamma_k$
than in $\sigma$:
$$R_k := \{ \gamma_k(i) \mid \gamma_k(i) > \sigma(i) \}.$$
Note that $R_1 = \{\sigma(i) \mid \sigma(i) > i\}$, so $R_1 = \Exc(\sigma)$.

 How does $R_{k+1}$ differ from $R_k$?  Recaull that $U_k$ is the set of positions where
 $\sigma$ and $\gamma_k$ disagree; assume that $\gamma_k \ne \sigma$,
 and hence $|U_k| \ge 2$
 (it is impossible for $\gamma_k$ and $\sigma$ to agree in exactly $n-1$ positions).
 Enumerate the positions in $U_k$ as $a_1 < \cdots < a_\ell$.

 Let $v$ denote the value $\gamma_k(a_\ell)$.  The correct position for $v$ must be
 some $a_i$ with $i < \ell$, so $v \in R_k$.  However, by the definition of \basicstrat,
 $v$ wraps around: $v = \gamma_{k+1}(a_1)$, and hence $v \notin R_{k+1}$.

 Every other value in $R_k$ moves farther to the right, so it remains in $R_{k+1}$.  If a
 value is not in $R_k$, then either it is correct (in which case $\gamma_{k+1}$ leaves it
 where it is); or it moves into the correct position; or it moves rightward but not all the way
 to the correct position.  In each case, that value is not in $R_{k+1}$.  We conclude that
 $R_{k+1} = R_k \setminus \{v\}$.
 This evolution is illustrated in Figure~\ref{fig-n9-circles}, a version of
Figure~\ref{fig-n9} in which the values in each $R_k$ are circled.
We conclude that, as long $\gamma_k \ne \sigma$, $|R_{k+1}| = |R_k| - 1$.

 \begin{figure}
   \begin{center}
  \begin{wordle}{724853169}
          [{
              \draw[line width=2.5pt, color=highlightcircle] (W-1-4) circle (.4cm);
              \draw[line width=2.5pt, color=highlightcircle] (W-1-7) circle (.4cm);
              \draw[line width=2.5pt, color=highlightcircle] (W-1-8) circle (.4cm);
              \draw[line width=2.5pt, color=highlightcircle] (W-2-6) circle (.4cm);
              \draw[line width=2.5pt, color=highlightcircle] (W-2-8) circle (.4cm);
              \draw[line width=2.5pt, color=highlightcircle] (W-3-7) circle (.4cm);
          }]
    123456789
    821354679
    728153469
    724853169
  \end{wordle}
   \end{center}
   \caption{The deterministic strategy \basicstrat guessing the permutation
     $\sigma = 724853169$.  We circle the values in each set $R_k$:
     the excedances of $\sigma$ are the positions $1$, $3$, and $4$, so the values
     $7$, $4$, and $8$ each wrap around, and the strategy succeeds in $1 + 3 = 4$ rounds.}
     \label{fig-n9-circles}
 \end{figure}

 Clearly, if $\gamma_k = \sigma$, then $R_k = \emptyset$.  The converse also holds:
 if $R_k = \emptyset$, then, for all $i$,
 we have $\gamma_k(i) \le \sigma(i)$.  Summing over all
 $i$, we have $\sum_i \gamma_k(i) \le \sum_i \sigma(i)$; both $\gamma_k$ and $\sigma$
 are permutations, so both sums equal $1 + \cdots + n$, and hence
 each inequality must be an equality and $\gamma_k = \sigma$.

 Let $r = 1 + \exc(\sigma)$.  For $k < r$, $|R_k| = r - k > 0$, so $\gamma_k \ne \sigma$,
 and \basicstrat continues.  After $r$ rounds, $|R_r| = 0$ and $\gamma_r = \sigma$.
 We conclude that \basicstrat solves $\sigma$ in exactly $1 + \exc(\sigma)$ guesses.
\end{proof}

Theorem~\ref{thm:main}, combined with the symmetry~\eqref{eq:euler-symmetry},
gives a simple description of the average-case performance of \basicstrat:

\begin{corollary}
\label{cor:expected:guesses}
If the secret $\sigma \in S_n$ is chosen uniformly at random, then the expected number of guesses for \basicstrat to solve $\sigma$ is $(n+1)/2$.
\end{corollary}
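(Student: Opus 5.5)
By Lemma~\ref{lemma:one-perm}, \basicstrat uses exactly $1+\exc(\sigma)$ guesses on the secret $\sigma$. So the expected number of guesses against a uniformly random $\sigma \in S_n$ is
\[
	1 + \frac{1}{n!}\sum_{\sigma \in S_n} \exc(\sigma) = 1 + \frac{1}{n!}\sum_{k=0}^{n-1} k\,A(n,k),
\]
where the second form groups permutations by their number of excedances, using Definition~\ref{def:euler} (equivalently, Theorem~\ref{thm:main}). The plan is therefore to show that the average number of excedances is $(n-1)/2$.

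For this I will use the symmetry~\eqref{eq:euler-symmetry}, $A(n,k)=A(n,n-1-k)$. Reindexing the sum by $k \mapsto n-1-k$ gives
\[
	\sum_k k\,A(n,k) = \sum_k (n-1-k)\,A(n,k).
\]
Adding the two expressions yields
\[
	2\sum_k k\,A(n,k) = (n-1)\sum_k A(n,k) = (n-1)\,n!,
\]
since every permutation is counted exactly once across the row. Hence the mean of $\exc(\sigma)$ is $(n-1)/2$, and the expected number of guesses is $1+(n-1)/2=(n+1)/2$.

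No step is a real obstacle; the only point needing care is the reindexing. As an independent check, one could compute the mean directly by linearity of expectation: position $i$ is an excedance with probability $(n-i)/n$, and summing over $i$ gives $\frac{1}{n}\cdot\frac{n(n-1)}{2}=(n-1)/2$. This confirms the symmetry argument without needing the descent interpretation.
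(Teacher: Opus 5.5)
Your argument is correct and is the one the paper indicates: it deduces the corollary from Theorem~\ref{thm:main} (equivalently Lemma~\ref{lemma:one-perm}) together with the symmetry~\eqref{eq:euler-symmetry}, and you handle the reindexing correctly. Your linearity-of-expectation check, using $\Pr(\sigma(i) > i) = (n-i)/n$, is also valid, and it has the small advantage of not relying on the descent interpretation, which is how the paper proves the symmetry.
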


%
%

\section{Proof of optimality.}
\label{sec:proof}

We now prove that \basicstrat is optimal in a strong sense: for every $r$, \basicstrat solves at least as many secret permutations within~$r$ guesses as any other strategy. The main technical statement is a deterministic one:

\begin{theorem}
\label{thm:opt}
For every deterministic strategy and every $d \geq 0$, the number of permutations in $S_n$ that the strategy solves within $d+1$ guesses is at most $\sum_{k=0}^{d} A(n,k)$.
\end{theorem}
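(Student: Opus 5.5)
The plan is to build, for a fixed deterministic strategy, an injection from the set of secrets solved within $d+1$ guesses into the set of permutations of $S_n$ with at most $d$ descents. By Proposition~\ref{prop:desrecur}, the latter set has size exactly $\sum_{k=0}^{d} A(n,k)$. The intended picture is that a permutation with at most $d$ descents splits into at most $d+1$ increasing runs, one run for each round.

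\textbf{Step 1: encode a solved secret by its first-hit times.} Let $\sigma$ be solved within $d+1$ rounds. For each position $i$, let $t(i)\in[d+1]$ be the first round $r$ with $\gamma_r(i)=\sigma(i)$; this exists because the final guess equals $\sigma$. I will show by induction on $r$ that the function $t$ determines the entire play, and hence determines $\sigma$.
\begin{itemize}
\item The guess $\gamma_r$ depends only on $U_1,\dots,U_{r-1}$, since the strategy is deterministic.
\item Given $t$ and $\gamma_1,\dots,\gamma_r$, we can compute $U_r$. A position $i$ with $t(i)>r$ is not in $U_r$. A position with $t(i)=r$ is in $U_r$. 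A position with $t(i)<r$ has known value $\sigma(i)=\gamma_{t(i)}(i)$, so we can check whether $\gamma_r(i)=\sigma(i)$.
\end{itemize}
Thus $\sigma\mapsto t$ is injective. However, there are $(d+1)^n$ functions $t$, so this encoding alone is far too coarse.

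\textbf{Step 2: refine to a permutation with few descents.} Let $B_r=t^{-1}(r)$ be the positions first hit in round $r$. Form the word $\pi$ by listing $B_1$ in increasing order, then $B_2$ in increasing order, and so on up to $B_{d+1}$. Then $\pi$ is a permutation whose descents can only occur at block boundaries, so $\des(\pi)\le d$. The alternative of listing values instead of positions should give the same count, and I would keep whichever makes decoding easier.

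What remains is to show that $\sigma\mapsto\pi$ is injective. The block boundaries are not recorded in $\pi$, so we need a canonical decoding. I would decode greedily. Knowing the play through round $r-1$ and the guess $\gamma_r$, take $B_r$ to be the longest increasing prefix of the unread part of $\pi$ that is \emph{consistent}: each of its positions must be unresolved, and the values $\gamma_r$ places there must be distinct and not yet known to belong elsewhere. The claim to prove is that this greedy cut always equals the true cut.

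\textbf{Main obstacle.} The greedy cut can fail when the first position $p$ of $B_{r+1}$ exceeds the last position of $B_r$ and $\gamma_r(p)$ is a value still possible at $p$. In that case the decoder cannot tell whether $p$ was hit in round $r$. I expect to handle this in two ways.
\begin{itemize}
\item First, normalize the strategy. By the argument in Section~\ref{sec:circular-shift-general}, we may assume $\gamma_r$ never places a value at a position already known to be wrong for it, without decreasing the number of secrets solved by each round.
\item Second, compare $\sigma$ with the secret $\sigma'$ that would make $p$ correct in round $r$. Both must then be solved, so I would try to charge one of them to a different $\pi$, for instance by moving $p$ into the other block, which changes $\pi$ at a descent.
\end{itemize}
If the direct injection resists, the fallback is a counting argument by induction on $n$ mirroring the recurrence of Proposition~\ref{prop:desrecur}. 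One would condition on the fate of the position holding the largest unresolved value, and show that the number of secrets solved within $d+1$ rounds, $N(n,d)$, satisfies
\[
N(n,d)\le (d+1)N(n-1,d)+(n-d)N(n-1,d-1),
\]
matching the Eulerian recurrence in cumulative form.
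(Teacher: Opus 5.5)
There is a genuine gap. The map $\sigma\mapsto\pi$ of Step~2 is not injective, and it already fails for \basicstrat{} itself, so no choice of decoder can rescue it.

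\begin{itemize}
\item With $n=2$ and $d=1$, the secret $12$ has $t\equiv1$, giving $B_1=\{1,2\}$ and $B_2=\varnothing$. The secret $21$ has $t\equiv2$, giving $B_1=\varnothing$ and $B_2=\{1,2\}$. Both produce $\pi=12$, and the permutation $21$ is never used.
\item For $n=3$ and $d=1$, the secrets $123$, $132$, $312$ have first-hit vectors $(1,1,1)$, $(1,2,2)$, $(2,2,2)$. All three give $\pi=123$, so the five secrets that \basicstrat{} solves within two guesses land on only three permutations.
\item Listing values instead of positions collides in exactly the same way.
\end{itemize}

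The cut points are lost whenever consecutive blocks happen to concatenate increasingly. That is precisely the situation you flag as the main obstacle, and neither remedy touches it. \basicstrat{} already satisfies your normalization. Charging a colliding secret to another $\pi$ would need a globally consistent reassignment valid for every strategy, including ones that guess non-permutation words, and that is the entire content of the theorem.

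The fallback is not viable as written either. The partial sums $F(n,d)=\sum_{k\le d}A(n,k)$ satisfy $F(n,d)=(d+1)F(n-1,d)+(n-1-d)F(n-1,d-1)$. Your coefficient $n-d$ would give $2F(2,1)+2F(2,0)=6>5=F(3,1)$. You also give no mechanism for reducing an arbitrary strategy on $[n]$ to one on $[n-1]$.

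The missing idea is a way to use that the secrets are permutations without building an explicit injection. Your Step~1 plays the same role as the paper's reduction: a solved secret is determined by its feedback, so it suffices to count feasible histories $U=(U_1,\dots,U_d)$ of length $d$.

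The paper then attaches to each history the polynomial $P_U=\prod_{j=1}^{d}\prod_{i\notin U_j}\bigl(x_i-\gamma_j^U(i)\bigr)$. This has degree at most $d$ in each variable and does not vanish at a secret producing $U$. These polynomials are linearly independent as functions on the $n!$ points $(\pi(1),\dots,\pi(n))$, by a triangularity argument. Order histories lexicographically by $(|U_1|,\dots,|U_d|)$. If $P_U(\sigma_{U'})\neq0$, then at the first round $s$ where the histories differ one gets $U'_s\subsetneq U_s$.

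The descent count enters through a dimension computation, not an encoding. A straightening argument shows that the restrictions of the Garsia--Stanton descent monomials $B_\pi$ with $\des(\pi)\le d$ form a basis of the space of such low-degree functions. That space therefore has dimension $\sum_{k=0}^{d}A(n,k)$.
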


The sum $\sum_{k=0}^{d} A(n,k)$ counts the permutations with at most
$d$ excedances, so for $d \geq n-1$ it counts all of $S_n$ and the
bound is trivial. For every $d$, the bound is sharp, because
Theorem~\ref{thm:main} says that \basicstrat{} solves precisely the
permutations with at most $d$ excedances within $d+1$ guesses. Thus, no
deterministic strategy, even one that guesses non-permutations, beats
\basicstrat. The corresponding statement for randomized strategies
follows by a standard derandomization argument.


\begin{theorem}
\label{thm:opt-rand}
Given a strategy $\mathcal{S}$, possibly randomized, and a uniformly random secret $\sigma \in S_n$, let $P_r(\mathcal{S})$ denote the probability that $\mathcal{S}$ solves $\sigma$ within $r$ guesses. Then $P_r(\basicstrat) \geq P_r(\mathcal{S})$ for every $r \geq 1$ and every $\mathcal{S}$.
\end{theorem}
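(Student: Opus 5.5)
We derive Theorem~\ref{thm:opt-rand} from Theorem~\ref{thm:opt} by averaging over the strategy's internal randomness. First we formalize a randomized strategy. Such a strategy $\mathcal{S}$ draws a random seed $\omega$ from some probability space, independently of the secret $\sigma$. From then on, for each fixed $\omega$, the guesses are determined by $\omega$ and the feedback received so far. Without loss of generality all randomness can be drawn up front: any coin flips made during the game can be precomputed as one long random string, at most $n$ guesses matter for us, and the feedback sets lie in a finite set. Hence for each fixed $\omega$ the strategy $\mathcal{S}_\omega$ is a deterministic strategy in the sense of Theorem~\ref{thm:opt}. Guesses may be arbitrary words in $[n]^n$, which Theorem~\ref{thm:opt} already allows.

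Fix $r \ge 1$ and set $d = r-1$. For each $\omega$, Theorem~\ref{thm:opt} says the set of secrets $\sigma \in S_n$ that $\mathcal{S}_\omega$ solves within $r$ guesses has size at most $\sum_{k=0}^{r-1} A(n,k)$. Because $\sigma$ is uniform and independent of $\omega$, the conditional probability of success given $\omega$ is at most $\frac{1}{n!}\sum_{k=0}^{r-1}A(n,k)$. Taking expectation over $\omega$ (Fubini, or the law of total probability) gives
\[
P_r(\mathcal{S}) \le \frac{1}{n!}\sum_{k=0}^{r-1} A(n,k).
\]

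Next we compute the left-hand side for \basicstrat, which is deterministic. By Lemma~\ref{lemma:one-perm} and Theorem~\ref{thm:main}, \basicstrat solves $\sigma$ within $r$ guesses exactly when $\exc(\sigma)\le r-1$. There are $\sum_{k=0}^{r-1}A(n,k)$ such permutations, so $P_r(\basicstrat)$ equals the bound above. This gives the claim for every $r$, including $r \ge n$, where both sides are $1$.

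The only delicate step is the measure-theoretic one: justifying that the seed can be taken independent of $\sigma$ and that each $\mathcal{S}_\omega$ is a legitimate deterministic strategy. The game has finitely many rounds of interest and a finite feedback alphabet, so we may take $\omega$ uniform on a finite set, or at least the sum over $\omega$ is a finite or measurable average. Then the averaging is elementary. We would also note, as in Section~\ref{sec:random-secret}, that this is the reason to work with a uniform secret: the setter's choice of distribution cannot hurt the guesser.
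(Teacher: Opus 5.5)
Your proposal is correct and is essentially the paper's proof. You fix the strategy's coin flips in advance so that each realization is a deterministic strategy, apply Theorem~\ref{thm:opt} with $d=r-1$ to bound its success count by $\sum_{k=0}^{r-1}A(n,k)$, and average over the coins; your explicit computation of $P_r(\basicstrat)$ via Lemma~\ref{lemma:one-perm} is exactly the sharpness remark the paper makes after Theorem~\ref{thm:opt}.
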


\begin{proof}[Proof (assuming Theorem~\ref{thm:opt})]
Fix $r \geq 1$. Before the game begins, the guesser can flip all the coins in advance, recording an outcome for every decision the strategy might ever face. Fixing these outcomes turns $\mathcal{S}$ into a deterministic strategy, and Theorem~\ref{thm:opt} applies to it. However the coins land, the resulting deterministic strategy solves at most as many secrets within $r$ guesses as \basicstrat{} does, and hence, against a uniformly random secret, succeeds with probability at most $P_r(\basicstrat)$. There is no way for the coins to land that beats \basicstrat, so averaging over the coin flips, $P_r(\mathcal{S}) \leq P_r(\basicstrat)$.
\end{proof}

Theorem~\ref{thm:opt-rand} also determines the optimal expected number of guesses.

\begin{corollary}
\label{cor:lower-bound}
Against a uniformly random secret $\sigma \in S_n$, every strategy requires at least $(n+1)/2$ guesses in expectation, and \basicstrat{} achieves this.
\end{corollary}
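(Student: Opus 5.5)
The plan is to write the expected number of guesses as a tail sum and bound each tail term using Theorem~\ref{thm:opt-rand}. For any strategy $\mathcal{S}$, possibly randomized, let $T$ be the number of guesses it needs to solve the uniformly random secret $\sigma$. If $T$ can be infinite with positive probability, the expectation is infinite and there is nothing to prove. Otherwise, since $T \ge 1$ takes positive integer values,
\[
\mathbb{E}[T] = \sum_{r \ge 0} \Pr[T > r] = \sum_{r\ge 0} \bigl(1 - P_r(\mathcal{S})\bigr),
\]
with the convention $P_0(\mathcal{S}) = 0$.

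Next I apply Theorem~\ref{thm:opt-rand} termwise. It gives $P_r(\mathcal{S}) \le P_r(\basicstrat)$ for every $r \ge 1$, and equality holds trivially at $r=0$. Hence every summand satisfies $1 - P_r(\mathcal{S}) \ge 1 - P_r(\basicstrat)$. Summing these termwise inequalities, which is legitimate because all terms are nonnegative, yields $\mathbb{E}[T_{\mathcal{S}}] \ge \mathbb{E}[T_{\basicstrat}]$.

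Finally, Corollary~\ref{cor:expected:guesses} says $\mathbb{E}[T_{\basicstrat}] = (n+1)/2$. This gives the lower bound, and \basicstrat attains it.

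Each step is routine. The only care needed is the tail-sum identity, in particular the $r=0$ term and the possibility of an infinite expectation, and both were handled above.
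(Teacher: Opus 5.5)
Your proposal is correct and matches the paper's proof: you write $\mathbb{E}[T]$ as the tail sum $\sum_{r\ge 0}\Pr(T>r)$, compare it termwise with \basicstrat{} using Theorem~\ref{thm:opt-rand}, and conclude with Corollary~\ref{cor:expected:guesses}. Your case split on whether $T$ is almost surely finite is harmless but not needed, since the tail-sum identity holds in $[0,\infty]$ either way.
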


\begin{proof}
Fix a strategy, let $T$ denote the number of guesses it takes, and let $T^*$ denote the number \basicstrat{} takes; Corollary~\ref{cor:expected:guesses} gives $\mathbb{E}[T^*] = (n+1)/2$. By Theorem~\ref{thm:opt-rand}, $\Pr(T > r) \geq \Pr(T^* > r)$ for every $r$, and summing over $r \geq 0$ yields
\[
	\mathbb{E}[T] = \sum_{r \geq 0} \Pr(T > r) \geq \sum_{r \geq 0} \Pr(T^* > r) = \mathbb{E}[T^*] = \frac{n+1}{2}.
\]
(If $T$ is not almost surely finite, then $\mathbb{E}[T] = \infty$ and the inequality holds trivially.)
\end{proof}

These guarantees concern a uniformly random secret, but the guesser can enforce them against any setter whatsoever. Recall from Section~\ref{sec:random-secret} that a guesser who relabels every guess by a uniformly random permutation $\tau$ faces, in effect, the uniformly random secret $\tau^{-1}\sigma$, no matter how $\sigma$ was chosen. Randomized in this way, \basicstrat{} solves every fixed secret, even one chosen adversarially, within $r$ guesses with probability $\sum_{k=0}^{r-1} A(n,k)/n!$, and in $(n+1)/2$ expected guesses. Nor can the guesser do any better, since the setter is free to choose the uniform distribution. In the language of game theory, these are the minimax values of \classic, and randomized \basicstrat{} is an optimal strategy for the guesser.


\subsection*{Feedback histories.}

Theorem~\ref{thm:opt} concerns deterministic strategies, so we fix one for the remainder of the section. We also adopt a bookkeeping convention: once our strategy has solved the secret, we let it continue guessing that same permutation forever. The game then continues indefinitely, so every secret produces feedback of every length, and the convention does not change the round in which any secret is first solved.

A \emph{feedback history} of length $d$ is a sequence $U=(U_1,\dots,U_d)$ of subsets of $[n]$, which we think of as the feedback from $d$ rounds of play, with $U_j$ recording which positions were guessed correctly in round~$j$. Such a history is \emph{feasible} if our strategy produces it against some secret permutation. We write $\gamma_j^U$ for the guess our strategy makes in round $j$ after receiving the feedback $U_1, \dots, U_{j-1}$. Note that because the strategy is deterministic, $\gamma_j^U$ depends only on these first $j - 1$ sets of $U$, and not on the later ones. For each feasible history $U$, we choose once and for all a secret permutation that produces $U$, and we call it $\sigma_U$, so that
\[
	U_j=\{i:\gamma_j^U(i)=\sigma_U(i)\}
	\qquad
	\text{for every } 1\le j\le d.
\]

Each feasible history of length $d$ accounts for at most one secret solved within $d+1$ guesses. To see this, note that the guess $\gamma_{d+1}^U$ is determined by the feasible history $U$. If a secret producing $U$ is solved in round $d+1$, then it equals $\gamma_{d+1}^U$ because that guess solved it, and if it was solved in an earlier round, then it equals $\gamma_{d+1}^U$ because the strategy repeats a solved secret forever. To prove Theorem~\ref{thm:opt}, it suffices to prove the bound
\[
	|\{\text{feasible histories of length } d\}| \leq \sum_{k=0}^{d} A(n,k).
\]
From here on, the strategy is implicit. We work only to bound the number of feasible histories.

To do so, we define, for each feasible history, a polynomial in the
variables $x_1, \dots, x_n$ of degree at most $d$ in each variable. We
regard these polynomials as functions on the $n!$ points of
$\mathbb{R}^n$ whose coordinates are $1, 2, \ldots, n$ in some order,
and we show that the functions corresponding to distinct feasible histories are
linearly independent. Here, we use the hypothesis that the secret is a
permutation. Each secret can be encoded as a point of this small set,
while the guesses, which may be arbitrary words, determine the
coefficients of the polynomials. The number of feasible histories is
therefore at most the dimension of the space $V_d$ of functions on
these points arising from polynomials obeying our degree bound, and we
compute this dimension to be precisely $\sum_{k=0}^{d} A(n,k)$, using
a basis of descent monomials due to Garsia and Stanton.

%
%

\subsection*{From histories to polynomials.}

Bounding the cardinality of a set by assigning linearly independent polynomials to its elements and computing the dimension of a space containing them is a standard method in combinatorics. It appears frequently in extremal problems; see, for example, Babai and Frankl's legendary notes~\cite{babai:linear-algebra-:}, or the books of Guth~\cite{guth:polynomial-meth:} or Matou\v{s}ek~\cite{matouv-sek:thirty-three-mi:}.

We begin by encoding permutations as points. Pick $n$ distinct real numbers $v_1, v_2, \ldots, v_n$, and encode each $\pi \in S_n$ as the point
\[
	p_\pi = (v_{\pi(1)}, v_{\pi(2)}, \ldots, v_{\pi(n)}) \in \mathbb{R}^n.
\]
Let $X_n$ be the set of these $n!$ points, one for each way to arrange $v_1, \ldots, v_n$ as a tuple.

\begin{proposition}
\label{prop:full-space}
The space of functions on~$X_n$ has dimension $n!$, and it has a basis of restrictions of polynomials.
\end{proposition}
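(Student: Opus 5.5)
The first claim needs no work: $X_n$ has exactly $n!$ points, because the $v_i$ are distinct and so distinct permutations $\pi$ give distinct tuples $p_\pi$. The space of all real-valued functions on a finite set of size $N$ is $\mathbb{R}^N$, with basis the indicator functions $\delta_q$ for $q \in X_n$. So the space of functions on $X_n$ has dimension $n!$.

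The substance is the second claim. It suffices to show that each indicator $\delta_q$ is the restriction to $X_n$ of a polynomial; the polynomials restricting to these indicators then give a basis of restrictions. I will use the standard Lagrange-interpolation construction. For $q = p_\pi = (v_{\pi(1)}, \ldots, v_{\pi(n)})$, define
\[
	f_\pi(x_1, \ldots, x_n) = \prod_{i=1}^{n} \prod_{j \neq \pi(i)} \frac{x_i - v_j}{v_{\pi(i)} - v_j}.
\]
The denominators are nonzero because the $v_j$ are distinct. At $p_\pi$, each factor equals $1$, so $f_\pi(p_\pi) = 1$. At any other point $p_\rho$ with $\rho \neq \pi$, choose a coordinate $i$ with $\rho(i) \neq \pi(i)$. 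Then the $i$th inner product contains the factor $x_i - v_{\rho(i)}$, which vanishes at $p_\rho$. Hence $f_\pi$ restricts to $\delta_{p_\pi}$.

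Since the $n!$ indicators form a basis, the restrictions of the $f_\pi$ form a basis of restrictions of polynomials. As a byproduct, each $f_\pi$ has degree at most $n-1$ in each variable, which foreshadows the degree-bounded spaces $V_d$. There is no real obstacle here; the only point to check is that the points of $X_n$ are distinct, which is exactly where the distinctness of the $v_i$ is used.
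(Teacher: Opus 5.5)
Your proof is correct and matches the paper's argument: the paper uses the same Lagrange interpolation polynomials $L_\pi$, checks that they restrict to the indicator functions of the points $p_\pi$, and concludes because these $n!$ indicators form a basis. You also note explicitly that the distinctness of the $v_i$ makes the $n!$ points $p_\pi$ distinct, a detail the paper leaves implicit.
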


\begin{proof}
For each $\pi \in S_n$, consider the Lagrange interpolation polynomial
\[
	L_\pi(x) = \prod_{i=1}^{n} \prod_{\substack{m \in [n] \\ m \neq \pi(i)}} \frac{x_i - v_m}{v_{\pi(i)} - v_m}.
\]
The value $L_\pi(p_\tau)$ is $1$ when $\tau = \pi$ and $0$ otherwise, so $L_\pi$ restricts to the indicator function of the point $p_\pi$. The indicator functions of the $n!$ points of $X_n$ form a basis for the space of functions on $X_n$, and this basis consists of restrictions of the polynomials~$L_\pi$.
\end{proof}
 
In the following, the choice of the values $v_i$ does not matter, so long as they are distinct, so we simplify notation by choosing $v_1 = 1$, $v_2 = 2$, $\ldots$, $v_n = n$. For a polynomial~$P$ and a permutation $\pi$, we also write~$P(\pi)$ as shorthand for~$P(p_\pi)$.

%
%

\subsection*{The history polynomials.}

For each feasible history $U = (U_1, \dots, U_d)$, define
\[
	P_{U,j} = \prod_{i \notin U_j} \bigl(x_i - \gamma_j^U(i)\bigr),
	\qquad
	P_U = \prod_{j=1}^{d} P_{U,j}.
\]
The polynomial $P_{U,j}$ records the wrong guesses in round $j$, one factor for each incorrect position, and the product $P_U$ records the wrong guesses across the whole history. To spell this out, a position $i \notin U_j$ is precisely one where the guessed value $\gamma_j^U(i)$ is not the secret's value at~$i$, and the factor $(x_i - \gamma_j^U(i))$ vanishes exactly on the points of $X_n$ that agree with this wrong guess. If the strategy solves its secret before round $j$, then $U_j = [n]$ and $P_{U,j}$ is an empty product, equal to the constant $1$.

Before going on, we pause for an example with $n = 3$ and $d = 1$, a history consisting of a single round. If the strategy's first guess is $\gamma_1 = 123$, then the feasible histories of length one are
\[
\begin{array}{c|c|c}
	U_1 & \text{secrets producing it} & P_U \\ \hline
	\{1,2,3\} & 123 & 1 \\
	\{1\} & 132 & (x_2 - 2)(x_3 - 3) \\
	\{2\} & 321 & (x_1 - 1)(x_3 - 3) \\
	\{3\} & 213 & (x_1 - 1)(x_2 - 2) \\
	\varnothing & 231 \text{ or } 312 & (x_1 - 1)(x_2 - 2)(x_3 - 3)
\end{array}
\]
Two agreements with a permutation force a third, so the feedbacks $\{1,2\}$, $\{1,3\}$, and $\{2,3\}$ are infeasible, and there are five feasible histories in all. If the strategy instead opens with the word $\gamma_1 = 111$, then the secret agrees with it precisely where the secret's value is $1$, and every permutation has exactly one such position, so only three histories are feasible:
\[
\begin{array}{c|c|c}
	U_1 & \text{secrets producing it} & P_U \\ \hline
	\{1\} & 123 \text{ or } 132 & (x_2 - 1)(x_3 - 1) \\
	\{2\} & 213 \text{ or } 312 & (x_1 - 1)(x_3 - 1) \\
	\{3\} & 231 \text{ or } 321 & (x_1 - 1)(x_2 - 1)
\end{array}
\]
Either way, the feasible histories partition the secrets, and having more histories is desirable, because it means the strategy separates the secrets better. Here the guess $123$ splits the six secrets into five classes, four of which are already singletons, while $111$ splits them into three classes of two, so the first guess $123$ is more informative. In each case, there are at most five feasible histories, matching the dimension $\dim V_1 = 5$ that we compute below.

We need two properties of $P_U$. First, within any round~$j$, each variable~$x_i$ appears in at most one factor of $P_{U,j}$, so across all $d$ rounds, $\deg_{x_i} P_U \leq d$ for each $i \in [n]$. Second, $P_U(\sigma_U) \neq 0$, because each $P_{U,j}$ multiplies only over the positions $i \notin U_j$, and these are precisely the positions where $\sigma_U(i) \neq \gamma_j^U(i)$, so every factor is nonzero at $\sigma_U$.

\begin{proposition}
\label{prop:indep}
The polynomials $\{P_U \mid U \text{ a feasible history of length } d\}$ are linearly independent
as functions on $X_n$.
\end{proposition}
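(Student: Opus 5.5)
The plan is to show that the evaluation matrix $\bigl(P_U(\sigma_V)\bigr)_{V,U}$, with rows and columns indexed by feasible histories of length $d$, is triangular with nonzero diagonal for a suitable total order on histories. We already know the diagonal is nonzero, since $P_U(\sigma_U) \neq 0$. So the work is to find an order in which $P_U(\sigma_V) = 0$ whenever $V$ comes strictly before $U$.

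\emph{First differing round.} Let $U \neq V$ be feasible histories, and let $j$ be the first index with $U_j \neq V_j$. Since the strategy is deterministic and $U_1,\dots,U_{j-1} = V_1,\dots,V_{j-1}$, the guesses agree through round $j$: $\gamma_j^U = \gamma_j^V =: \gamma$. Suppose there is a position $i \in V_j \setminus U_j$. Because $i \notin U_j$, the polynomial $P_{U,j}$ contains the factor $(x_i - \gamma(i))$. Because $i \in V_j$, we have $\sigma_V(i) = \gamma(i)$. Hence this factor vanishes at $\sigma_V$, and so $P_U(\sigma_V) = 0$. This holds in particular whenever $|V_j| \geq |U_j|$, since two distinct sets with $|V_j| \ge |U_j|$ cannot satisfy $V_j \subseteq U_j$.

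\emph{The order.} Fix a total order $\lhd$ on subsets of $[n]$ in which larger sets come first, with ties among equal-size sets broken arbitrarily. Order histories lexicographically by $\lhd$, compared round by round. If $V$ precedes $U$, then at the first differing round $j$ we have $V_j \lhd U_j$, hence $|V_j| \ge |U_j|$ and $V_j \neq U_j$. By the previous paragraph, $P_U(\sigma_V) = 0$.

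\emph{Conclusion.} Suppose $\sum_U c_U P_U = 0$ on $X_n$ with not all $c_U$ zero. Let $W$ be the earliest history with $c_W \neq 0$. Every other $U$ with $c_U \neq 0$ comes after $W$, so $P_U(\sigma_W) = 0$ for all such $U$. Evaluating the relation at $\sigma_W$ therefore gives $c_W P_W(\sigma_W) = 0$, which contradicts $P_W(\sigma_W) \neq 0$.

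The only delicate point is choosing the order correctly. Pairwise ``one of $P_U(\sigma_V)$, $P_V(\sigma_U)$ vanishes'' is not enough on its own; we need a consistent triangular order. Ordering by size of the feedback set at the first differing round supplies exactly this.
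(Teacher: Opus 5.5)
Your proof is correct and is essentially the paper's argument: both compare two histories at their first differing round, where the guesses coincide, and use a position in one feedback set but not the other to kill $P_U$ at the other history's chosen secret, which makes the evaluation matrix triangular with nonzero diagonal. The only difference is cosmetic. The paper orders histories lexicographically by the size sequence $(|U_1|,\dots,|U_d|)$ with smaller sizes first and shows $P_U(\sigma_{U'})\neq 0$ forces $U'_s\subsetneq U_s$, whereas you refine by a total order on subsets with larger sets first, giving the transposed orientation.
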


\begin{proof}
  We evaluate each polynomial $P_U$ at each of the chosen secrets $\sigma_{U'}$ and show that the resulting matrix, with rows and columns suitably ordered, is triangular with nonzero diagonal entries.
  As a consequence, the polynomials are linearly independent
  as functions on $X_n$.

  For the ordering, sort the feasible histories by their sequences of
  feedback sizes $(|U_1|, |U_2|, \dots, |U_d|)$ lexicographically,
  breaking ties arbitrarily. The diagonal entries $P_U(\sigma_U)$
  are nonzero, as observed above. It remains to show that if $U \neq U'$
  and $P_U(\sigma_{U'}) \neq 0$, then $U'$ comes strictly before $U$ in
  our order.

  Assume the premise.
  Let $s$ be the first round in which $U$ and $U'$ differ,
  so that $U_j = U_j'$ for $j < s$ and $U_s \neq U_s'$.
  The strategy's guesses depend only on the preceding feedback,
  so $\gamma_j^U = \gamma_j^{U'}$ for every $j \leq s$.
  Since, by our assumption, $P_U(\sigma_{U'}) \neq 0$,
  every factor of $P_U$ is nonzero at $\sigma_{U'}$.
  In particular, for each $i \notin U_s$,
  the round-$s$ factor $P_{U,s}$ records $\sigma_{U'}(i) \neq \gamma_s^U(i)$.
  Because the round-$s$ guesses for the two histories,
  $\gamma_s^{U}$ and $\gamma_s^{U'}$, are equal,
  the secret $\sigma_{U'}$ disagrees with $\gamma_s^{U'}$
  at every position outside $U_s$.
  Therefore, every position outside $U_s$ is also outside $U_s'$.
  In other words, $U_s' \subseteq U_s$
  and the containment is strict because $U_s \neq U_s'$.
  Thus, $|U_s'| < |U_s|$.  Since $U_j = U_j'$ for $j < s$,
  the history $U'$ comes strictly before $U$ in our order.
\end{proof}

%
%

\subsection*{The function space~$V_d$.}

We now define the function space where we work, which consists of the functions on $X_n$ that polynomials of low degree can produce. Let
\[
	V_d = \bigl\{ P|_{X_n} \mid P \in \mathbb{R}[x_1, \dots, x_n] \text{ and } \deg_{x_i} P \leq d \text{ for every index } i \bigr\}.
\]
Because $\deg_{x_i} P_U \leq d$ for every $i$, the restrictions $P_U|_{X_n}$ all lie in~$V_d$.  By Proposition~\ref{prop:indep},
these restrictions are linearly independent, so
\[
	|\{\text{feasible histories of length } d\}| \leq \dim V_d.
\]

We can compute the dimension of $V_{n-1}$ immediately.
The Lagrange polynomials $L_\pi$ of Proposition~\ref{prop:full-space}
have degree at most $n-1$ in every variable, so their restrictions lie
in $V_{n-1}$. Since these restrictions span the space of functions on
$X_n$ by Proposition~\ref{prop:full-space}, the space $V_{n-1}$ is the
full space of functions on $X_n$, and $\dim V_{n-1} = n!$.

Before computing $\dim V_d$ for general $d$, we pause to describe a closely related object, the coinvariant algebra of $S_n$. This is the context in which the basis we borrow was first discovered.
While the parallel between that setting and ours is worth seeing,
we emphasize that nothing to follow depends on it; the computation
of $\dim V_d$ is developed from first principles.
Let $e_i = e_i(x_1, \dots, x_n)$ be the elementary symmetric polynomial
of degree~$i$,
\[
	e_i = \sum_{1 \leq j_1 < \dots < j_i \leq n} x_{j_1} \cdots x_{j_i};
\]
for instance, $e_1 = x_1 + \cdots + x_n$ and $e_n = x_1 \cdots x_n$. The fundamental theorem of symmetric polynomials says that every symmetric polynomial can be expressed uniquely as a polynomial in $e_1, \ldots, e_n$. Let $I$ be the ideal $(e_1, \ldots, e_n)$. The \emph{coinvariant algebra} of $S_n$ is the quotient
\[
	C_n = \mathbb{R}[x_1, \dots, x_n] / I.
\]
By the fundamental theorem, every symmetric polynomial with zero constant term lies in~$I$, and so is equivalent to zero in the quotient $C_n$.

Our setting is slightly different. Instead of working modulo the ideal $I$, we study when two polynomials agree as functions on the point set $X_n$, writing $P \equiv_{X_n} Q$ to mean that $P(x) = Q(x)$ for every $x \in X_n$. Symmetric polynomials play the same role here. The coordinates of every point of~$X_n$ are $1, 2, \ldots, n$ in some order, and a symmetric polynomial does not distinguish between orders, so every symmetric polynomial~$S$ is constant on~$X_n$, with $S \equiv_{X_n} S(1, 2, \ldots, n)$. The same situation arises in the study of ``generalized coinvariant algebras'' by Haglund, Rhoades, and Shimozono~\cite[Section~4.1]{haglund:ordered-set-par:}, who study a family of point sets $Y_{n,k}$; their $Y_{n,n}$ with parameters $\alpha_i = i$ is our~$X_n$, though we redevelop the little we need.

Garsia and Stanton~\cite[Section~9]{garsia:group-actions-o:} gave a basis of the coinvariant algebra indexed by permutations. We show that the same construction gives a basis of the space of functions on $X_n$, adapted to the spaces $V_d$ in the following sense. The spaces $V_0 \subseteq V_1 \subseteq \cdots$ sit inside one another (a ``filtration'' of the function space), and the basis elements indexed by permutations with at most $d$ descents form a basis of $V_d$. The \emph{descent monomial} of a permutation $\pi \in S_n$ is
\[
	B_\pi(x) = \prod_{j \in \Des(\pi)} \bigl(x_{\pi(1)} x_{\pi(2)} \cdots x_{\pi(j)}\bigr),
\]
where $\Des(\pi)$ is the descent set defined in Definition~\ref{def:desc}
of Section~\ref{sec:eulerian}.
Each variable $x_{\pi(i)}$ appears in $B_\pi$ once for every descent~$j$ with $j \geq i$, so writing
\[
	d_i(\pi) = |\{j \in \Des(\pi) \mid j \geq i\}|,
\]
the number of descents at or to the right of position $i$, we have
$$B_\pi = x_{\pi(1)}^{d_1(\pi)} x_{\pi(2)}^{d_2(\pi)} \cdots x_{\pi(n)}^{d_n(\pi)}.$$
The exponents satisfy $d_1(\pi) \geq d_2(\pi) \geq \cdots \geq d_n(\pi) = 0$, and the largest, $d_1(\pi)$, counts all descents of $\pi$. Therefore, when $\des(\pi) \leq d$, we have $B_\pi|_{X_n} \in V_d$.

For example, the six descent monomials for $n = 3$, listed by number of descents, are
\[
\begin{array}{c|c|c}
	\pi & \Des(\pi) & B_\pi \\ \hline
	123 & \varnothing & 1 \\ \hline
	213 & \{1\} & x_2 \\
	312 & \{1\} & x_3 \\
	132 & \{2\} & x_1 x_3 \\
	231 & \{2\} & x_2 x_3 \\ \hline
	321 & \{1, 2\} & x_3 \cdot x_3 x_2 = x_3^2 x_2
\end{array}
\]
We observe that $V_0$ has the single basis element $1$, the space $V_1$ has the five basis elements with $\des(\pi) \leq 1$, and $V_2$ has all six. The restriction $x_1|_{X_3}$ lies in $V_1$, but $x_1$ is not itself a descent monomial, so the claimed basis must express it another way. Since $e_1 = 6$ on $X_3$, we have $x_1 = (e_1 - 6) + 6 B_{123} - B_{213} - B_{312}$, and therefore $x_1 \equiv_{X_3} 6 B_{123} - B_{213} - B_{312}$. The proof of Proposition~\ref{prop:basis} iterates this kind of substitution, trading each monomial that is not a descent monomial for smaller ones, in a sense we now make precise.

The argument requires a total order on monomials. We compare two monomials by first comparing their exponent vectors sorted into decreasing order, lexicographically; if the sorted vectors agree, we compare the exponent vectors themselves lexicographically. The effect of the tie-breaker is that between two arrangements of the same exponents, the larger monomial places its larger exponents on smaller-indexed variables. For example, with $n = 3$,
\[
	x_3^2 x_2 \prec x_1^2 x_2
\]
because both have sorted exponent vector $(2, 1, 0)$, and the tie is broken by the exponent vectors themselves: $(0, 1, 2) <_{\mathrm{lex}} (2, 1, 0)$. Also,
\[
	x_1 x_2 x_3 \prec x_1^2 x_2
\]
because the sorted exponent vectors are $(1, 1, 1)$ and $(2, 1, 0)$, and $(1, 1, 1) <_{\mathrm{lex}} (2, 1, 0)$. This is not a \emph{monomial order} in the sense of Gr\"obner basis theory, as it is not compatible with multiplication, but the argument does not require that. The minimal monomial is the constant $1$, and all monomials arising in the argument have every exponent at most $d$, so the induction runs over a finite set.

Under this order, the substitutions we iterate trade the maximal monomial of a suitable symmetric polynomial for smaller ones. For instance, when $n = 3$, the monomial $x_1 x_2$ is the maximal monomial of
\[
	e_2 = x_1 x_2 + x_1 x_3 + x_2 x_3.
\]
Since $e_2 = 1\cdot 2 + 1\cdot 3 + 2\cdot 3 = 11$ on $X_3$, we have
\[
	x_1 x_2 \equiv_{X_3} 11 - x_1 x_3 - x_2 x_3 = 11 B_{123} - B_{132} - B_{231},
\]
trading the non-basis monomial $x_1 x_2$ for smaller descent monomials.

This straightening argument is not new. Allen~\cite[Section~2]{allen:the-descent-mon:} uses it to give an elementary proof that the descent monomials form a basis of the coinvariant algebra $C_n$; see also Adin, Brenti, and Roichman~\cite[Sections~2.5--3.4]{adin:descent-represe:}. Where the coinvariant setting discards symmetric polynomials with zero constant term, as these are zero in $C_n$, we discard the difference between a symmetric polynomial and its value on $X_n$, as these agree on $X_n$.

\begin{proposition}
\label{prop:basis}
The functions $\{B_\pi|_{X_n} \mid \text{$\pi \in S_n$ and $\des(\pi) \leq d$}\}$ form a basis of~$V_d$.
Consequently, $\dim V_d = \sum_{k=0}^{d} A(n,k)$.
\end{proposition}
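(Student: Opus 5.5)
The plan is to show separately that the restricted descent monomials with $\des(\pi)\le d$ span $V_d$ and that they are linearly independent. The dimension formula then follows from Proposition~\ref{prop:desrecur}, since the number of $\pi\in S_n$ with $\des(\pi)\le d$ is $\sum_{k=0}^d A(n,k)$.

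\emph{Independence.} This reduces to the case $d=n-1$ once spanning is known. Spanning for $d=n-1$ says that the $n!$ functions $B_\pi|_{X_n}$ span $V_{n-1}$. We computed above that $\dim V_{n-1}=n!$. Hence these $n!$ functions form a basis, and in particular every subfamily is linearly independent. So the real work is spanning: every monomial $x^a$ with all $a_i\le d$ must be congruent, modulo $\equiv_{X_n}$, to a combination of $B_\pi$ with $\des(\pi)\le d$.

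\emph{Spanning.} I would prove this by induction along the total order $\prec$ on monomials with exponents at most $d$. This is a finite set with minimum $1=B_{\mathrm{id}}$.

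Given $x^a$, let $\pi$ list the variable indices by decreasing exponent, breaking ties by increasing index. Set $\lambda_i=a_{\pi(i)}$, so that $\lambda_1\ge\cdots\ge\lambda_n$. If $i\in\Des(\pi)$ then $\pi(i)>\pi(i+1)$, so the tie-break forces $\lambda_i>\lambda_{i+1}$. Since $d_i(\pi)-d_{i+1}(\pi)$ equals $1$ exactly at descents and $0$ otherwise, the differences $\mu_i=\lambda_i-d_i(\pi)$ form a weakly decreasing sequence of nonnegative integers. Writing $m_i=\mu_i-\mu_{i+1}\ge 0$ (with $\mu_{n+1}=0$), we get the factorization
\[
x^a \;=\; B_\pi\cdot\prod_{i=1}^{n}\bigl(x_{\pi(1)}\cdots x_{\pi(i)}\bigr)^{m_i}.
\]
Also $\des(\pi)=d_1(\pi)\le\lambda_1\le d$.

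Now compare $x^a$ with $Q=B_\pi\prod_i e_i^{m_i}$. Since each $e_i$ is symmetric, it is constant on $X_n$, so $Q\equiv_{X_n} c\,B_\pi$ for the constant $c=\prod_i e_i(1,\dots,n)^{m_i}$. Every monomial of $Q$ has each exponent at most $d_1(\pi)+\sum_i m_i=\lambda_1\le d$, so all monomials stay in the finite set. The key claim is that $x^a$ occurs in $Q$ with coefficient $1$, and every other monomial of $Q$ is strictly $\prec x^a$. Granting this, $x^a\equiv_{X_n} c\,B_\pi-(Q-x^a)$, and induction handles each of the smaller monomials in $Q-x^a$.

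\emph{Main obstacle.} I expect this leading-term claim to be the hard step. Each monomial of $Q$ arises by choosing, for each factor $e_i$, an $i$-subset of variables to replace $\{\pi(1),\dots,\pi(i)\}$. Adding indicator vectors of $i$-subsets to the sorted vector $d(\pi)$ produces an exponent vector whose sorted version is dominated (in the majorization order, hence lexicographically) by $\lambda$. Equality of sorted vectors should force each chosen subset to be an initial segment of the same ordering up to ties. I would then check that among these tied rearrangements, the tie-break ordering of $\pi$ makes $x^a$ itself strictly lex-largest. The likely difficulty is handling ties carefully, and the multiplicity-one statement needs the same care. I would first verify both on the $n=3$ examples above and then do the general majorization argument.
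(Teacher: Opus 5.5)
Your proposal is correct, and its skeleton is the paper's: independence from $\dim V_{n-1}=n!$, spanning by induction along $\prec$, the same tie-broken $\pi$, and the same factorization $M=B_\pi N$ (your $\mu_j$ are the paper's $c_j$). The one real difference is the symmetric multiplier.

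The paper uses the monomial symmetric polynomial $S$ indexed by $(c_1,\dots,c_n)$. Its monomials are exactly the distinct rearrangements of $N$, each with coefficient $1$. Multiplicity one is therefore automatic, and a single swap argument plus the tie-break shows $B_\pi N$ is the strict maximum of $B_\pi S$.

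Your $\prod_i e_i^{m_i}$ is $e_{\mu'}$, which equals $S$ plus a combination of monomial symmetric polynomials indexed by partitions strictly dominated by $\mu$. So you pay for an extra layer, but the argument you sketch for it works. Write a monomial of $Q$ in $\pi$-coordinates as $\delta+v$, with $\delta=(d_1(\pi),\dots,d_n(\pi))$ and $v=\sum_T\mathbf{1}_T$. Let $t_k(\cdot)$ denote the sum of the $k$ largest entries. Then
$t_k(\delta+v)\le t_k(\delta)+t_k(v)\le (d_1+\cdots+d_k)+\sum_T\min(|T|,k)=\lambda_1+\cdots+\lambda_k$.
Hence the sorted exponent vector is dominated by $\lambda$: it is lexicographically no larger, and strictly smaller unless equal.

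Equality for every $k$ forces $t_k(v)=\mu_1+\cdots+\mu_k$, so $v$ is a rearrangement of $\mu$. From there the paper's tie analysis finishes: rearrangements can occur only within runs of equal $d_i$, and $\pi$ increases along each such run. Multiplicity one comes from the same bound. If $v=\mu$ exactly, then $|T\cap[k]|=\min(|T|,k)$ for every $k$, so every chosen $T$ is the initial segment $\{1,\dots,|T|\}$.

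What the paper's choice buys is a shorter leading-term check, with no non-rearrangement monomials to dispose of. Yours is the elementary-symmetric form of the straightening, and it gives an explicit constant $\prod_i e_i(1,\dots,n)^{m_i}$.
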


\begin{proof}
Proposition~\ref{prop:full-space} shows that $\dim V_{n-1} = n!$, and there are exactly $n!$ descent monomials $B_\pi$, one for each $\pi \in S_n$. Therefore, if the restrictions of the descent monomials span $V_{n-1}$, then they form a basis of it, and every subset of them is linearly independent. Define
\[
	W_d = \langle B_\pi|_{X_n} \mid \text{$\pi \in S_n$ and $\des(\pi) \leq d$} \rangle.
\]
Since every element of $V_d$ is the restriction of a linear combination of monomials with all exponents at most $d$, it suffices to prove that every such monomial restricts to an element of $W_d$.

We give a proof by induction along our order on monomials. The minimal monomial is the constant $1$, which is the descent monomial of the identity permutation and so lies in every $W_d$. Let
\[
	M = x_1^{a_1} x_2^{a_2} \cdots x_n^{a_n}
\]
be a monomial with all $a_i \leq d$, and assume that for all monomials $M' \prec M$ with exponents at most $d$, we have $M'|_{X_n} \in W_d$. We show there is a descent monomial $B_\pi$ with $B_\pi|_{X_n}\in W_d$ and a symmetric polynomial $S$ such that $M$ is the maximal monomial in the product $B_\pi S$.

Define a permutation $\pi$ so that
\[
	a_{\pi(1)} \ge a_{\pi(2)} \ge \cdots \ge a_{\pi(n)},
\]
breaking ties so that where $a_{\pi(i)} = a_{\pi(i+1)}$, we have $\pi(i) < \pi(i+1)$. This convention means that a descent $\pi(i) > \pi(i+1)$ implies $a_{\pi(i)} > a_{\pi(i+1)}$.

Consider the descent monomial
\[
	B_\pi
	=
	x_{\pi(1)}^{d_1} x_{\pi(2)}^{d_2} \cdots x_{\pi(n)}^{d_n},
\]
where we abbreviate $d_i = d_i(\pi)$. The differences
\[
	c_j = a_{\pi(j)} - d_j
\]
form a weakly decreasing nonnegative sequence $c_1, c_2, \ldots, c_n$. To see this, first note that since $d_n = 0$, we have $c_n = a_{\pi(n)} \geq 0$. If~$j$ is a descent of $\pi$, then $d_j = d_{j+1}+1$ and $a_{\pi(j)}\geq a_{\pi(j+1)}+1$, and otherwise, $d_j = d_{j+1}$ and $a_{\pi(j)} \geq a_{\pi(j+1)}$. Either way, $c_j \geq c_{j+1}$.

As a consequence, $B_\pi$ divides~$M$, and we can write $M = B_\pi N$ for the monomial
\[
	N = x_{\pi(1)}^{c_1} x_{\pi(2)}^{c_2} \cdots x_{\pi(n)}^{c_n}.
\]
Since $c_1 \geq 0$, we have
\[
	\des(\pi) = d_1 \le a_{\pi(1)} \le d,
\]
showing that $B_\pi|_{X_n} \in W_d$.

Form the symmetric polynomial $S$ whose terms are the distinct monomials of the form
\[
	x_{\tau(1)}^{c_1} x_{\tau(2)}^{c_2} \cdots x_{\tau(n)}^{c_n}
\]
with $\tau \in S_n$. (This is the \emph{monomial symmetric polynomial} indexed by the partition $(c_1,c_2,\dots,c_n)$.)

We show the monomial $B_\pi N$ is maximal among the monomials of $B_\pi S$. Suppose that $B_\pi N'$ is one of these monomials and that it is maximal. For some permutation $\rho$,
\[
	B_\pi N' = \prod x_{\pi(i)}^{d_i + c_{\rho(i)}}.
\]
First we argue that we may take $\rho$ to be the identity. Suppose instead that some pair of indices $i < j$ has $c_{\rho(i)} < c_{\rho(j)}$. Swapping these two values of $c$ replaces the exponents $d_i + c_{\rho(i)}$ and $d_j + c_{\rho(j)}$ with $d_i + c_{\rho(j)}$ and $d_j + c_{\rho(i)}$. Since $d_i \geq d_j$, this swap pairs the two larger summands together, so it weakly increases the larger of the two exponents while preserving their sum, and hence weakly increases the sorted exponent vector. Performing such swaps until none remains, the maximality of $B_\pi N'$ is never disturbed, and we arrive at the exponents $d_i + c_i$ of $B_\pi N$. The sorted exponent vector of $B_\pi N'$ therefore equals that of $B_\pi N$, and the swaps that preserve it exactly are those within positions sharing a common value of $d_i$. So $N'$ can differ from $N$ only in how it distributes the $c_i$ among such positions. These positions form a run containing no descent of $\pi$, so $\pi$ increases along the run, and $N$ assigns its weakly decreasing exponents to variables of increasing index. Among these arrangements, $B_\pi N$ is therefore the one placing its larger exponents on smaller-indexed variables, which the tie-break makes largest, so $B_\pi N' = B_\pi N$.

Since $S$ is symmetric, its restriction to $X_n$ is the constant $S(1,2,\ldots,n)$. Let $S' = S - N$. Restricting to~$X_n$, we have
\[
	M
	=
	B_\pi N
	\equiv_{X_n}
	B_\pi S(1,2,\dots,n) - B_\pi S',
\]
and the term $B_\pi S(1, 2, \dots, n)$ is a scalar multiple of the descent monomial $B_\pi$, whose restriction lies in $W_d$. As for $B_\pi S'$, we have shown that every monomial $B_\pi N'$ of $B_\pi S'$ satisfies $B_\pi N' \prec M$, and each has largest exponent at most that of $M$, hence at most $d$. By the inductive hypothesis, each $B_\pi N'|_{X_n}$ lies in $W_d$, so $B_\pi S'|_{X_n} \in W_d$, and therefore $M|_{X_n} \in W_d$. This completes the induction, and with it the proof.
\end{proof}

The pieces of Theorem~\ref{thm:opt} are now all in place.
Each deterministic strategy solves at most one secret per
feasible feedback history of length $d$.
Proposition~\ref{prop:indep} shows the history polynomials are
linearly independent in $V_d$.
Proposition~\ref{prop:basis} shows
$\dim V_d = \sum_{k=0}^{d} A(n,k)$.
So, a deterministic strategy solves at most $\sum_{k=0}^{d} A(n,k)$
secrets within $d+1$ guesses.
With Theorem~\ref{thm:opt} proved,
Theorem~\ref{thm:opt-rand} and its consequences hold,
proving that \basicstrat{} is optimal for \classic.

%
%

\section{Connections with Mastermind.}
\label{sec:mastermind}

There are natural connections between Wordle and
Mordecai Meirowitz's classic game Mastermind.
In Mastermind, the setter chooses a ``code'': a string of length~$m$
from the alphabet $[n]$.
In each round, the guesser tries a string in $[n]^m$,
and the setter responds with two numbers:
the number of entries that are correct (right number, right place)
and the number of additional entries that are
misplaced (right number, wrong place).
This is a similar setup to Wordle, except that in
Wordle we find out specifically which entries are correct or
misplaced, instead of just a count.

Donald Knuth~\cite{Knuth1976} showed that, for the original game
($m = 4$ and $n = 6$),
five rounds are sufficient for the guesser to guess
the code. Many others have evaluated the game for various values
of~$m$ and~$n$. The typical emphasis is on a worst-case analysis; one
can imagine the setter adversarially altering the code to be
consistent with all information presented thus far, with the goal of
maximizing the number of rounds. Worst-case analysis is natural here
because the symmetrization of Section~\ref{sec:random-secret} does
not apply. In \classic, relabeling values turns any secret into any
other, and this is why the guesser can force even an adversarial
setter to behave, in effect, like the uniform distribution. In
Mastermind, relabeling colors and permuting positions can never turn
the code $(1,2,3,4)$, which has no repeated entries, into the code
$(1,1,2,3)$, which has a repeat. Guarantees against a random
code therefore say nothing about guarantees against a particular
setter, and the literature focuses on the worst case. Seen from this
angle, requiring the secret in \classic{} to be a permutation is not
merely a simplification; it puts all secrets into a single equivalence
class, and this is what lets the guesser reduce any setter's strategy to a
uniformly random one.

One Mastermind variant is ``Permutation Mastermind,'' in which $m = n$
and both the setter's code and the guesser's guesses must contain each
value exactly once. (In this variant, the setter responds with the
number $c$ of correct entries, since the number misplaced is simply
$n-c$.) El Ouali, Glazik, Sauerland, and Srivastav~\cite{EOGlSaSr2018}
showed that $n \log_2 n + O(n)$ queries suffice. Martinsson and
Su~\cite{martinsson-su:mastermind} later showed that a guesser who may
guess arbitrary words needs only $O(n)$ queries; as each response
conveys at most $\log_2(n+1)$ bits and the secret is one of $n!$
permutations, this is optimal up to the constant factor. Afshani,
Agrawal, Doerr, Doerr, Larsen, and Mehlhorn~\cite{AADDLM2019} analyze
the query complexity of a problem rooted in a Mastermind-like game
with a secret permutation.

Li and Zhu~\cite{LiZhu2024} analyze ``Clear Mastermind,'' which is
Mastermind with Wordle's feedback, including its conventions for
repeated letters. When the secret is a permutation, they observe that
this feedback conveys nothing beyond the set of correct positions,
since every letter is known to occur exactly once; their game is
then precisely \classic. They show that the guesser needs $n$ guesses
in the worst case, against a setter who may adaptively change the
secret. This bound also follows from our Theorem~\ref{thm:opt}: since
$A(n, n-1) = 1$, whatever the strategy, some secret permutation
requires $n$ guesses even when the setter commits to it in advance.

Li and Zhu also give a simple strategy achieving this worst case: the
first~${n-1}$ guesses are the constant strings $1^n, 2^n, \ldots,
(n-1)^n$, at which point the guesser knows the secret and solves in
one more guess. This strategy achieves its worst case with admirable
consistency: it solves every one of the $n!$ secrets in exactly $n$
guesses, and so averages $n$ guesses as well, while \basicstrat matches its
worst case and averages $(n+1)/2$.

%
%

\section{Further work.}
\label{sec:open}

The game \classic{} admits natural variations.

First, one can generalize from the ``monochrome'' case to a ``multicolor'' version of the problem. Imagine that, instead of just choosing a permutation of $[n]$, the setter chooses an arrangement of~$n$ cards from a larger deck: each card has a rank between~$1$ and~$n$ and one of~$s$ colors. We still require that the setter's arrangement contain each rank precisely once. There is a natural extension of \basicstrat{} to this setting, and we conjecture that
the strategy is again optimal. Also, the numbers that arise from \rainbow correspond to higher-order Eulerian numbers~\cite{steingrimsson}. See~\cite{arxiv-kutin-smithline} for more details.

Second, one can vary the feedback. Mastermind's numerical feedback and
Wordle's positional feedback are two points on a spectrum, and the
number of guesses needed to solve a secret permutation is a measure of
how much each feedback reveals. Our results give the exact answer for
Wordle feedback, both on average and in the worst case. For
Mastermind's feedback, the number of guesses needed grows linearly
in~$n$ by the work of Martinsson and
Su~\cite{martinsson-su:mastermind} discussed in
Section~\ref{sec:mastermind}, but the optimal constant is unknown.
What happens for feedback rules between and beyond these remains open.

Finally, our optimality theorem says that no strategy beats \basicstrat{} for a uniformly random secret, but the analysis of \classic{} against other natural distributions on secrets, or with restricted guesses, remains open. We would be delighted to see what other mathematics these games reveal.

%
%

\section*{Acknowledgments.}
The authors are indebted to Alex Miller for making the connection with excedances and for
proposing the two-colored
variant. We would also like to thank Joe Buhler, Tim Chow, Keith Frankston,
Zachary Hamaker, Ben Howard, Danny Scheinerman, Jeff VanderKam, and Doron Zeilberger for helpful discussions.
And we appreciate the package \texttt{wordle.sty}, by
Andrew Mathas and C{\'e}dric Pierquet, which we used to make the Wordle diagrams
throughout the paper.

%
%
%

\bibliographystyle{plainurl}
\bibliography{pw3}

\begin{thebibliography}{10}

\bibitem{adin:descent-represe:}
Ron~M. Adin, Francesco Brenti, and Yuval Roichman.
\newblock Descent representations and multivariate statistics.
\newblock {\em Trans. Amer. Math. Soc.}, 357(8):3051--3082, 2005.
\newblock \href {https://doi.org/10.1090/S0002-9947-04-03494-4}
  {\path{doi:10.1090/S0002-9947-04-03494-4}}.

\bibitem{AADDLM2019}
Peyman Afshani, Manindra Agrawal, Benjamin Doerr, Carola Doerr, Kasper~Green
  Larsen, and Kurt Mehlhorn.
\newblock The query complexity of a permutation-based variant of {M}astermind.
\newblock {\em Discrete Appl. Math.}, 260:28--50, 2019.
\newblock \href {https://doi.org/10.1016/j.dam.2019.01.007}
  {\path{doi:10.1016/j.dam.2019.01.007}}.

\bibitem{allen:the-descent-mon:}
Edward~E Allen.
\newblock The descent monomials and a basis for the diagonally symmetric
  polynomials.
\newblock {\em J. Algebraic Combin.}, 3(1):5--16, 1994.
\newblock \href {https://doi.org/10.1023/A:1022481303750}
  {\path{doi:10.1023/A:1022481303750}}.

\bibitem{babai:linear-algebra-:}
L{\'a}szl{\'o} Babai and P{\'e}ter Frankl.
\newblock {\em Linear Algebra Methods in Combinatorics}.
\newblock 2024.
\newblock Unpublished manuscript.
\newblock URL:
  \url{https://people.cs.uchicago.edu/~laci/babai-frankl-book2024.pdf}.

\bibitem{Benveniste2022Wordle}
Alexis Benveniste.
\newblock Wordle is a love story.
\newblock {\em The New York Times}, 2022.
\newblock URL:
  \url{https://www.nytimes.com/2022/01/03/technology/wordle-word-game-creator.html}.

\bibitem{EOGlSaSr2018}
Mourad El~Ouali, Christian Glazik, Volkmar Sauerland, and Anand Srivastav.
\newblock On the query complexity of {B}lack-{P}eg {AB}-{M}astermind.
\newblock {\em Games}, 9(1):Paper No. 2, 12 pp., 2018.
\newblock \href {https://doi.org/10.3390/g9010002}
  {\path{doi:10.3390/g9010002}}.

\bibitem{Foata1965}
Dominique Foata.
\newblock {\'E}tude alg\'ebrique de certains probl\`emes d'analyse combinatoire
  et du calcul des probabilit\'es.
\newblock {\em Publ. Inst. Statist. Univ. Paris}, 14:81--241, 1965.

\bibitem{FoSc}
Dominique Foata and Marcel-P. Sch\"utzenberger.
\newblock {\em Th\'eorie g\'eom\'etrique des polyn\^omes eul\'eriens}, volume
  Vol. 138 of {\em Lecture Notes in Mathematics}.
\newblock Springer-Verlag, Berlin-New York, 1970.
\newblock \href {https://doi.org/10.1007/BFb0060799}
  {\path{doi:10.1007/BFb0060799}}.

\bibitem{garsia:group-actions-o:}
A.~M. Garsia and D.~Stanton.
\newblock Group actions of {S}tanley--{R}eisner rings and invariants of
  permutation groups.
\newblock {\em Adv. in Math.}, 51(2):107--201, 1984.
\newblock \href {https://doi.org/10.1016/0001-8708(84)90005-7}
  {\path{doi:10.1016/0001-8708(84)90005-7}}.

\bibitem{guth:polynomial-meth:}
Larry Guth.
\newblock {\em Polynomial Methods in Combinatorics}, volume~64 of {\em
  University Lecture Series}.
\newblock American Mathematical Society, Providence, RI, 2016.
\newblock URL: \url{https://bookstore.ams.org/ulect-64/}.

\bibitem{haglund:ordered-set-par:}
James Haglund, Brendon Rhoades, and Mark Shimozono.
\newblock Ordered set partitions, generalized coinvariant algebras, and the
  delta conjecture.
\newblock {\em Adv. Math.}, 329:851--915, 2018.
\newblock \href {https://doi.org/10.1016/j.aim.2018.01.028}
  {\path{doi:10.1016/j.aim.2018.01.028}}.

\bibitem{Hiveley2025}
Aurora Hiveley.
\newblock Experimenting with {P}ermutation {W}ordle, 2025.
\newblock URL: \url{https://arxiv.org/abs/2506.23452}, \href
  {https://arxiv.org/abs/2506.23452} {\path{arXiv:2506.23452}}.

\bibitem{Knuth1976}
Donald~E. Knuth.
\newblock The computer as master mind.
\newblock {\em J. Recreat. Math.}, 9(1):1--6, 1976/77.

\bibitem{arxiv-kutin-smithline-multi}
Samuel~A. Kutin and Lawren~M. Smithline.
\newblock Multicolor {Permutation Wordle}.
\newblock in preparation.

\bibitem{arxiv-kutin-smithline}
Samuel~A. Kutin and Lawren~M. Smithline.
\newblock Permutation wordle, 2026.
\newblock URL: \url{https://arxiv.org/abs/2408.00903}, \href
  {https://arxiv.org/abs/2408.00903} {\path{arXiv:2408.00903}}.

\bibitem{LiZhu2024}
Renyuan Li and Shenglong Zhu.
\newblock Playing {M}astermind with {W}ordle-like feedback.
\newblock {\em Amer. Math. Monthly}, 131(5):390--399, 2024.
\newblock \href {https://doi.org/10.1080/00029890.2024.2308489}
  {\path{doi:10.1080/00029890.2024.2308489}}.

\bibitem{martinsson-su:mastermind}
Anders Martinsson and Pascal Su.
\newblock Mastermind with a linear number of queries.
\newblock {\em Combin. Probab. Comput.}, 33(2):143--156, 2024.
\newblock \href {https://doi.org/10.1017/s0963548323000366}
  {\path{doi:10.1017/s0963548323000366}}.

\bibitem{matouv-sek:thirty-three-mi:}
Ji{\v{r}}{\'\i} Matou{\v{s}}ek.
\newblock {\em Thirty-three Miniatures: Mathematical and Algorithmic
  Applications of Linear Algebra}, volume~53 of {\em Student Mathematical
  Library}.
\newblock American Mathematical Society, Providence, RI, 2010.
\newblock URL: \url{https://bookstore.ams.org/stml-53}.

\bibitem{Monkovic2026WordleHard}
Toni Monkovic, Eve Washington, and Tom Giratikanon.
\newblock Wordle's hard mode is actually easier, 730 million games show.
\newblock {\em The New York Times}, 2026.
\newblock URL:
  \url{https://www.nytimes.com/2026/06/18/upshot/wordle-hard-mode.html}.

\bibitem{petersen}
T.~Kyle Petersen.
\newblock {\em Eulerian Numbers}.
\newblock Birkh\"auser Advanced Texts: Basler Lehrb\"ucher.
  Birkh\"auser/Springer, New York, NY, 2015.
\newblock \href {https://doi.org/10.1007/978-1-4939-3091-3}
  {\path{doi:10.1007/978-1-4939-3091-3}}.

\bibitem{math-curse}
Jon Scieszka and Lane Smith.
\newblock {\em Math Curse}.
\newblock Viking, New York, NY, 1995.

\bibitem{Stanley2025}
Richard Stanley.
\newblock Guessing a permutation in a way analogous to {W}ordle.
\newblock MathOverflow, 2025.
\newblock URL: \url{https://mathoverflow.net/q/497111}.

\bibitem{steingrimsson}
Einar Steingr\'imsson.
\newblock Permutation statistics of indexed permutations.
\newblock {\em European J. Combin.}, 15(2):187--205, 1994.
\newblock \href {https://doi.org/10.1006/eujc.1994.1021}
  {\path{doi:10.1006/eujc.1994.1021}}.

\bibitem{Tracy2022Wordle}
Marc Tracy.
\newblock The {New York Times} buys {W}ordle.
\newblock {\em The New York Times}, 2022.
\newblock URL:
  \url{https://www.nytimes.com/2022/01/31/business/media/new-york-times-wordle.html}.

\end{thebibliography}

\end{document}